\theoremstyle{plain}
\newtheorem{theorem}{Theorem}[section]
\newtheorem*{theorem*}{Theorem}
\newtheorem{proposition}[theorem]{Proposition}
\newtheorem{corollary}[theorem]{Corollary}
\newtheorem{lemma}[theorem]{Lemma}
\theoremstyle{definition}
\newtheorem{definition}[theorem]{Definition}
\newtheorem{notation}[theorem]{Notation}
\newtheorem{remark}[theorem]{Remark}
\newtheorem{example}[theorem]{Example}
\newcommand{\enm}[1]{\ensuremath{#1}}          %
\newcommand{\cal}[1]{\mathcal{#1}}
\newcommand{\PP}{\enm{\mathbb{P}}}
\newcommand{\KK}{\enm{\mathbb{K}}}
\newcommand{\Gg}{\enm{\cal{G}}}
\newcommand{\Ii}{\enm{\cal{I}}}
\newcommand{\Ll}{\enm{\cal{L}}}
\newcommand{\Mm}{\enm{\cal{M}}}
\newcommand{\Oo}{\enm{\cal{O}}}
\newcommand{\Rr}{\enm{\cal{R}}}
\newcommand{\Ss}{\enm{\cal{S}}}
\newcommand{\spa}[1]{\langle {#1}\rangle}
\begin{document}

\title[Contact loci and identifiability]{On the dimension of contact loci and the identifiability of tensors}

\author[]{Edoardo Ballico, Alessandra Bernardi and Luca Chiantini}
\address[Edoardo Ballico]{Dipartimento di Matematica,  Univ. Trento, Italy}
\email{edoardo.ballico@unitn.it }
\address[Alessandra Bernardi]{Dipartimento di Matematica,  Univ. Trento, Italy}
\email{alessandra.bernardi@unitn.it }
\address[Luca Chiantini]{Dipartimento di Ingegneria dell'Informazione e Scienze Matematiche,  Univ. Siena, Italy}
\email{luca.chiantini@unisi.it}

\begin{abstract}
Let $X\subset \PP^r$ be an integral and non-degenerate variety. Set $n:= \dim (X)$. We prove that  if the $(k+n-1)$-secant
variety of $X$ has (the expected) dimension $(k+n-1)(n+1)-1<r$ and $X$ is not uniruled by lines, then $X$ is not $k$-weakly defective 
and hence the $k$-secant variety satisfies identifiability, i.e. a general element of it
is in the linear span of a unique $S\subset X$ with $\sharp (S) =k$. We apply this result to many Segre-Veronese varieties 
and to the identifiability of Gaussian mixtures $\Gg _{1,d}$. If $X$ is the Segre embedding of a multiprojective space we prove 
identifiability for the $k$-secant variety (assuming that the $(k+n-1)$-secant variety has dimension $(k+n-1)(n+1)-1<r$, 
this is a known result  in many cases), beating several bounds on the identifiability of tensors.\end{abstract}

\maketitle

\section{Introduction}
The computation of the decompositions of tensors and the related evaluation of their ranks recently received great 
interest from people working in algebraic geometry. Indeed, a geometric analysis on projective spaces of tensors, 
Segre embeddings and their secant varieties produced currently several new results in the field.

From the geometric point of view, the study of decompositions and ranks can be turned to a general problem. For any (irreducible)
algebraic variety $X$ which spans a projective  space $\PP^r$, one can define the {\it  $X$-rank} $r_X(q)$ of
a point  $q\in \PP^r$ as the minimal cardinality of a finite set 
$S\subset X$ such that $q\in \spa{ S}$, where $\spa{ \,\, }$ denotes the linear span. Any set
$S$ for which the minimum is attained is a {\it minimal $X$-decomposition} of $q$. 

In tensor analysis, one interprets $\PP^r$ as a space of tensors of given type $(n_1+1)\times\dots\times (n_s+1)$
(mod scalars), and $X$ as the locus of tensors of rank $1$. In geometric terms, $X$ is the multiprojective 
product $\PP^{n_1}\times\dots\times \PP^{n_s}$, naturally embedded in $\PP^r$ via the Segre map. Then, the notions 
of $X$-rank and minimal decompositions match with the usual multilinear algebra  notion. The set of tensors
of fixed rank $k$ corresponds to an open subset of the $k$-th secant variety $\sigma_k(X)$ of $X$. Properties
of the rank decompositions can be  derived from geometric properties of $X$, which determine some features
of $\sigma_k(X)$. Examples of such analysis are contained in several papers, e.g. see
\cite{a1, ah,  cc1, cc2, gm, m1}, and the book \cite{l}.

 In this paper we study the problem of the {\it identifiability} of generic points $q\in\PP^r$ of given $X$-rank $k$. 
 A point $q$ is $X$-identifiable when the set  $\Ss (q, X)$ of minimal $X$-decompositions of $q$ is a singleton; 
 in any case the cardinality of  $\Ss (q, X)$ is called the \emph{secant degree of $q$} or the $k$-th secant degree of $q$.
 In tensor analysis the identifiability of a tensor is fundamental for algorithms of data compression and
 for the analysis of mixture models (see \cite{ddl1,ddl2,ddl3,sb}). We will say that $X$ is {\it $k$-identifiable} 
 if a generic point of $\sigma_k(X)$ is $X$-identifiable.
 
 We first work over an algebraically closed field of characteristic $0$.
  Our investigation is based on the notion of weak defectivity introduced by the last author and C. Ciliberto in \cite{cc1}.
 A projective variety $X$ is {\it weakly $k$-defective} if hyperplanes of $\PP^r$, which are tangent to $X$ at
 $k$ general points, are indeed tangent along a positive dimensional subvariety. It turns out (see \cite[Theorem 2.5]{cc2})
 that when $r>k(n+1)-1$ then  $X$ is $k$-identifiable, unless it is weakly $k$-defective (unfortunately the reverse is not true).
 Therefore the knowledge on the {non-$k$} weak defectivity provides an effective way to determine geometrically the identifiability of generic points.
 This approach has been applied to tensors and linear systems of tensors  in a series of papers (\cite {a, bbcc, bv, bc,  co, cov,
 cov1}), where identifiability problems are partially or totally solved. 

A challenging task is therefore to get criterions that can establish identifiability (or not) in the cases when weak defectivity arises. 
 To this purpose there is a related notion that is very helpful, meaning the \emph{tangential weak defectivity}. 
 Our new analysis starts with the following observation (see \cite{cc3}). If $X$ is a weakly $k$-defective variety of dimension $n$, then a general choice
 of a set $A$ of $k$  points on $X$ determines a positive dimensional {\it tangential $k$-contact locus $\Gamma_k(A)$}: 
 the tangency locus of the span of the tangent spaces to $X$ at the points of $A$. If for a general choice of $A$  we have that $\dim \Gamma_k(A)>0$ 
 than $X$ is  \emph{tangential $k$-weakly defective}. It turns out that the $k$-th secant degree of a general element of $X$ is {related
 with the $k$-th  secant degree of the contact locus $\Gamma_k(A)$ for a general choice of $A$ (cf. Lemma \ref{gamma}, Corollary \ref{corollary:sec:deg}, 
 \cite[Theorem 2.4]{cc2} and part (iii) of \cite[Proposition 3.9]{cc3}). We will exploit in more details the relation in Section \ref{secdeg}.}
 
 Moreover,  when $r\geq (k+1)(n+1)-1$, then the weak $k$-defectivity 
 of $X$ implies the weak $(k+1)$-defectivity. Moreover, we prove that, under some geometric assumption on $X$,
if we take a generic subset $A$ as above and let $A'$ be obtained by adding to $A$ a generic point of $X$, then the dimension
of the tangential contact locus $\Gamma_{k+1}(A')$ is strictly bigger than the dimension of $\Gamma_k(A)$. As the dimension
of contact loci is bounded by $n-1$, by repeating the previous construction we obtain the following result.

\begin{theorem*} (see Theorem \ref{ip1}).
Let $X\subset \PP^r$ be an integral and non-degenerate $n$-dimensional variety and let $k$ be a positive integer. 
Assume that $X$ is not uniruled by lines and that $\dim \sigma _{k+n-1}(X) ={(k+n-1)}(n+1)-1 <r$.
Then $X$ is not weakly $j$-defective for every $j\leq k$.
\end{theorem*}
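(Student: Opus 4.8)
The plan is to argue by contradiction, exploiting the strict monotonicity of the dimension of the tangential contact locus under the adjunction of a generic point. Suppose $X$ were weakly $j_0$-defective for some $j_0\le k$. By the relation between weak defectivity and tangential weak defectivity recalled above (cf. \cite{cc3}), a general set $A_0\subset X$ with $\sharp(A_0)=j_0$ then has a tangential contact locus $\Gamma_{j_0}(A_0)$ of dimension $d_0\ge 1$, since the tangency locus of the span $\langle T_aX:a\in A_0\rangle$ contains the positive-dimensional contact locus of any hyperplane through that span.

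Next I would run the key iteration. Starting from $A_0$, I adjoin generic points of $X$ one at a time, producing general sets $A_0\subset A_1\subset\cdots$ with $\sharp(A_i)=j_0+i$. The hypothesis that $X$ is not uniruled by lines, together with $\dim\sigma_{k+n-1}(X)=(k+n-1)(n+1)-1<r$, is exactly what lets me invoke at each stage the lemma giving the strict increase $\dim\Gamma_{m+1}(A')>\dim\Gamma_m(A)$ when a generic point is added: the single numerical condition at level $k+n-1$ forces the secant varieties to have the expected dimension, and leaves room inside $\PP^r$, at every intermediate level $m\le k+n-1$, which is the range swept out by the iteration. Writing $d_i=\dim\Gamma_{j_0+i}(A_i)$, I thus obtain a strictly increasing chain $d_0<d_1<d_2<\cdots$, whence $d_i\ge d_0+i\ge 1+i$.

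Finally I would reach a contradiction from the a priori bound $\dim\Gamma_m(A)\le n-1$. For a general $A$ of cardinality $m\le k+n-1$ the linear span $\langle T_aX:a\in A\rangle$ is a proper subspace of $\PP^r$, because $\dim\sigma_{k+n-1}(X)<r$; since $X$ is integral, non-degenerate and $n$-dimensional, its tangency locus cannot be all of $X$, so $\dim\Gamma_m(A)\le n-1$. Applying this with $m=j_0+n-1$ contradicts $d_{n-1}\ge n$. As the largest cardinality used along the chain is $j_0+n-1\le k+n-1$, this is precisely the level at which the hypothesis is imposed; hence no such $j_0$ exists and $X$ is not weakly $j$-defective for any $j\le k$.

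The main obstacle is not the bookkeeping of this induction but the uniform applicability of the strict-increase lemma along the whole chain: one must verify that the expected-dimensionality of $\sigma_{k+n-1}(X)$ propagates down to each level $m<k+n-1$ used above, so that the generic-point construction stays non-degenerate and the ``not uniruled by lines'' hypothesis can genuinely be converted into the inequality $\dim\Gamma_{m+1}>\dim\Gamma_m$ at every step. Establishing that strict inequality (presumably via a Terracini-type comparison of the contact loci before and after adding the point) is the technical heart on which the entire argument rests.
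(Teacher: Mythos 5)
Your overall strategy coincides with the paper's: assume weak $j_0$-defectivity for some $j_0\leq k$, propagate it upward, force the dimension of the general tangential contact locus to increase strictly at each of $n-1$ successive steps, and contradict the bound $\dim \Gamma_m\leq n-1$. The skeleton is right, and your closing paragraph correctly locates where the real work lies---but that work is exactly what is missing, so the proposal has a genuine gap rather than a complete argument. The strict-increase statement you invoke is not available as a standalone lemma that the hypotheses ``let you invoke''; what is actually provable (the paper's Lemma \ref{a2}) is a dichotomy: if $\dim\sigma_{m+1}(X)=(m+1)(n+1)-1<r$ and $X$ is weakly $m$-defective, then \emph{either} $\dim\Gamma_m<\dim\Gamma_{m+1}$, \emph{or} both $\Gamma_m$ and $\Gamma_{m+1}$ are of type II (reducible), every irreducible component of each is a linear space, and these components are linearly independent. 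Establishing this dichotomy is the technical heart: one takes $B\subset A$ general with $\sharp(B)=m$, $\sharp(A)=m+1$, so that $\Gamma_m\subseteq\Gamma_{m+1}$, invokes \cite[Proposition 3.9]{cc3} (equidimensionality, number and distribution of components, dimensions of the spans of the contact loci), and then runs a Grassmann-formula computation on the spans $\Pi^i$ of the components to show that equality of the dimensions $\dim\Gamma_m=\dim\Gamma_{m+1}$ forces $\dim\Pi^i=\dim\Gamma^i$, i.e.\ linearity and independence of the components. Nothing in your ``Terracini-type comparison'' sketch produces this; Terracini only identifies $M_A$ with the tangent space to the secant variety, which is already built into the definition of the contact locus.

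Relatedly, you never say how ``not uniruled by lines'' actually enters, and your phrasing suggests it is fuel for the strict-increase step itself; it is not. Its only role (exactly as in the paper's proof of Theorem \ref{ip1}) is to kill the second branch of the dichotomy: the components of a general tangential contact locus pass through general points of $X$, so if they were positive-dimensional linear spaces, there would be a line through a general point of $X$, i.e.\ $X$ would be uniruled by lines. Without first isolating the dichotomy, the uniruledness hypothesis has nothing to bite on, and the chain $1\leq\dim\Gamma_{j_0}<\dots<\dim\Gamma_{j_0+n-1}\leq n-1$ cannot even be started. The remaining ingredients you use are fine and genuinely elementary: weak defectivity propagates upward under the expected-dimension hypothesis (Remark \ref{rem:catena:weak}, from \cite{cc1}), non-defectivity at level $k+n-1$ descends to all lower levels, and $\dim\Gamma_m\leq n-1$ because the contact locus is a proper subvariety of the $n$-dimensional variety $X$.
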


It follows from the previous analysis that, with the assumptions of Theorem \ref{ip1},  $X$ is $j$-identifiable for
every $j\le k$. 

The result can be directly applied e.g. to Segre-Veronese embeddings of products of projective spaces, provided 
that the multidegree $(d_1,\dots ,d_s)$ satisfies  $d_i>1$ for all $i$.
When some degree $d_i$ is $1$, as in the case of Segre embeddings (where $d_i=1$ for all $i$), then Theorem \ref{ip1} 
does not apply directly. However, for the Segre product we can modify the argument of Theorem \ref{ip1},
and prove that the same conclusion holds when for each $i\in \{1,\dots ,s\}$ 
there is  $j\in \{1,\dots ,s\}\setminus \{i\}$ such that $n_j=n_i$ (see Theorem \ref{ip3}).

Combining  Theorem \ref{ip3} with the main result in \cite{cgg}, for the Segre embedding $X$ of $s\geq 5$ 
copies of $\PP^1$, we find that $X$ is not weakly $k$-defective, hence it is $k$-identifiable, 
whenever $(k+s-1)(s+1) < 2^s-1$  (see Corollary \ref{ip4}).

Corollary \ref{ip4} improves our previous knowledge on the weak defectivity of Segre embeddings
of many copies of $\PP^1$, hence on the identifiability of generic {\it binary} tensors. 
Indeed it extends the range where we know that $k$-identifiability holds 
up to $k\leq \frac{s^2-1}{s+1} -s$, which beats all previously known bounds (see Section 4 of \cite {bco}) as $s$ grows.
Since identifiability cannot hold, for trivial dimensional reasons, when  $k> s^2/(s+1)-1$, then we are quite close
to a complete  analysis of the generic identifiability of binary tensors. We also get almost optimal results
for the Segre embedding of $(\PP^m)^s$, $m>1$, see Corollary \ref{e1}) (we use the very strong \cite[Theorem 3.1]{ah0}).

See Example \ref{r+1} for the case of Gaussian mixtures $\Gg_{1,d}$. 
\smallskip

When the assumptions of Theorem \ref{ip1} does not hold, one cannot conclude the non weak $k$-defectivity
of $X$. Yet, even if $X$ is weakly $k$-defective, it can still be $k$-identifiable. 
We prove the following result.

{\begin{theorem*} (see Theorem \ref{ipx})
Let $X$ be the Segre embedding of a product $\mathbb{P}^{n_1}\times \cdots \times \mathbb{P}^{n_q}$ 
in $\mathbb{P}^r$, with $n=\sum n_i=\dim(X)$. Let $k$ be a positive integer. 
Assume that  {$\dim \sigma _{k+n-1}(X) =(k+n-1)(n+1)-1< r$} (in particular
 $X$ is not $(k+n-1)$-defective).
Then $X$ is $j$-identifiable for every $j\le k$.
\end{theorem*}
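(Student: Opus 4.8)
The plan is to reduce to the case $j=k$ and then split according to whether $X$ is weakly $k$-defective. First I would note that the hypothesis is inherited by the smaller secant varieties: the defect $(j+n-1)(n+1)-1-\dim\sigma_{j+n-1}(X)$ is non-decreasing in $j$ as long as one stays below $\PP^r$, so the vanishing of the defect at $k+n-1$ (guaranteed by $\dim\sigma_{k+n-1}(X)=(k+n-1)(n+1)-1<r$) forces $\dim\sigma_{j+n-1}(X)=(j+n-1)(n+1)-1\le(k+n-1)(n+1)-1<r$ for every $j\le k$. Hence it suffices to establish $k$-identifiability under the stated hypothesis, the assertion for $j<k$ being the same statement applied with $k$ replaced by $j$.

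Now fix a general $A\subset X$ with $\sharp(A)=k$ and a general $q\in\spa{A}$. If $X$ is \emph{not} weakly $k$-defective, then since $r>(k+n-1)(n+1)-1\ge k(n+1)-1$ (the gap being $(n-1)(n+1)\ge 0$), I would simply invoke \cite[Theorem 2.5]{cc2} to conclude $\Ss(q,X)=\{A\}$, that is, $k$-identifiability. Thus the entire difficulty sits in the case where $X$ \emph{is} weakly $k$-defective, which is precisely the regime that Theorem \ref{ip1} and Theorem \ref{ip3} cannot reach, because the Segre embedding of a multiprojective space is uniruled by lines.

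In that case the tangential contact locus $\Gamma:=\Gamma_k(A)$ has positive dimension, and I would exploit the relation between secant degree and contact locus (Lemma \ref{gamma} and Corollary \ref{corollary:sec:deg}, cf. part (iii) of \cite[Proposition 3.9]{cc3}): every minimal $X$-decomposition of a general $q\in\spa{A}$ is contained in $\Gamma$, so the $k$-th secant degree of $X$ equals that of $\Gamma$, and the task reduces to showing that $A$ is the unique subset of $\Gamma$ of cardinality $k$ whose span contains $q$. The structural input special to the Segre is an explicit description of $\Gamma$: writing a tangent hyperplane at $A$ as the kernel of contraction against a dual tensor $M$, the tangency condition $T_yX\subseteq H$ becomes the simultaneous vanishing of all the partial contractions of $M$, and for general $A$ this cuts out a sub-Segre $\PP^{m_1}\times\cdots\times\PP^{m_q}$ (with each $m_i$ controlled by $k$ and $n_i$) embedded in a proper linear subspace of $\PP^r$. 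On this sub-Segre $A$ is again a set of general points, and I would prove uniqueness inside $\Gamma$ by an induction that adjoins general points $a_{k+1},\dots,a_{k+n-1}$ one at a time, using the bound $\dim\Gamma_{k+i}(A_i)\le n-1$ together with the non-defectivity of $\sigma_{k+n-1}(X)$ to force the configuration to become rigid after at most $n-1$ steps.

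The main obstacle is exactly this last structural step. Because $X$ is uniruled by lines, the strict growth $\dim\Gamma_{k+1}(A')>\dim\Gamma_k(A)$ that powers the proof of Theorem \ref{ip1} may fail, so a pure dimension count is unavailable; instead one must use the multilinear algebra of the partial contractions of $M$ both to pin down the sub-Segre structure of $\Gamma$ and to exclude spurious $k$-subsets, and to do this for an \emph{arbitrary} multidegree, without the balancing hypothesis $n_i=n_j$ that made Theorem \ref{ip3} tractable. I expect this to require a case analysis of the factors $\PP^{n_i}$ according to whether $n_i$ is large or small relative to $k$, with the outcome fed back into the secant-degree relation of Corollary \ref{corollary:sec:deg}.
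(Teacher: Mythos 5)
Your framing coincides with the paper's up to the point where the real work begins: reduce to $j=k$, dispose of the non-weakly-defective case via \cite[Theorem 2.5]{cc2}, and then use Lemma \ref{gamma} and Corollary \ref{corollary:sec:deg} to work inside the tangential contact locus. But in the weakly $k$-defective case --- the only case that matters, as you correctly note --- your proposal stops being a proof. Your structural claim, that for general $A$ the tangency conditions cut out a single sub-Segre $\PP^{m_1}\times\cdots\times\PP^{m_q}$, is both unproved and at odds with the known examples: for weakly defective Segre products (e.g.\ Example \ref{aumb}, $\PP^2\times\PP^2\times\PP^5$ with $k=5$) the general tangential contact locus has type II, i.e.\ it is a union of $k$ pairwise disjoint, linearly independent linear spaces, each contained in a single fiber of one of the projections --- not a sub-Segre. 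And the subsequent ``induction adjoining general points to force rigidity'' is exactly the step you concede you cannot carry out; conceding the main obstacle is a gap, not a proof.

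What actually closes the argument in the paper is a chain of ideas absent from your sketch. Since weak $k$-defectivity propagates upward and contact loci have dimension at most $n-1$, among $\Gamma_k,\dots,\Gamma_{k+n-1}$ two consecutive ones, say $\Gamma_j$ and $\Gamma_{j+1}$, must have equal dimension; Lemma \ref{a2} then forces both to be unions of linearly independent \emph{linear} spaces $L_1,\dots,L_{j+1}$. Remark \ref{b1}, plus a monodromy argument, places all the $L_i$ inside fibers of the same projection $X\to Y$, where $Y$ is the Segre of the remaining factors. Then comes the decisive dichotomy on the span $M$ of the $L_i$: if $M\cap X=\bigcup L_i$, identifiability follows from \cite[Example 2.2]{bc} together with Lemma \ref{gamma}, since linearly independent linear spaces are identifiable; otherwise an extra point of $M\cap X$ projects to a point of $Y$, outside the images $Q_i$ of the $L_i$, lying in the span of the $j$ general points $Q_i$, and the trisecant lemma then forces $Y$ to have small codimension in its span, i.e.\ to be unbalanced in the sense of \cite[Section 8]{bco}; by \cite[Theorem 4.4]{aop} this makes $\sigma_{j+1}(X)$ either fill $\PP^r$ or be defective, contradicting the hypothesis $\dim\sigma_{k+n-1}(X)=(k+n-1)(n+1)-1<r$ either way. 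None of this --- the equal-dimension step, the fiber structure, the dichotomy on $M\cap X$, or the trisecant/unbalanced mechanism --- appears in your proposal, so the weakly defective case remains entirely open.
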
}

It would be interesting to overcame the assumption ``$X$ not uniruled by lines '' of Theorem \ref{ip3} in other cases, 
apart the Segre varieties covered by Theorem \ref{ipx}, because it would expand the known cases of identifiability 
(e.g. for Grassmannians see \cite{aop1}, \cite{bdg}, \cite{b}, \cite{cgg1}; for the tangential variety of the Veronese 
varieties see \cite{av}). We just recall that conjecturally only four Grassmannians have some defective secant variety.

We stress that, in particular, Remark \ref{comput} suggests that if one is interested in the $k$-identifiability
of some specific Segre product then the computational procedure induced by Theorem \ref{ipx} can be faster than
previously known direct procedures.
 \medskip

In the last section of the paper we work over an algebraically closed base field $\KK$ of arbitrary characteristic. 
To include varieties $X$ defined in positive characteristic, we will assume that $X$ is \emph{not very strange}
(see Definition \ref{defvs}). Under this mild assumption, we prove the following.

\begin{theorem}\label{ii1}
Let $M$ be an integral projective variety and $\Ll$, $\Rr$ line bundles on $M$ with $\Ll$ very ample. 
Set $n:= \dim M$. Fix a linear subspace $V\subseteq H^0(\Ll)$ such that the associated
map $j: M\to \PP^r$ is an embedding with image not very strange. Fix an integer $k \leq \min \{h^0(\Rr), \dim V -n-2\}$. 
Let $W\subseteq H^0(\Ll\otimes \Rr)$ be the
image of the multiplication map $V\otimes H^0(\Rr )\to H^0(\Ll \otimes \Rr )$ and let 
$X\subset \PP^r$, $r =\dim W -1$, be the embedding induced by $W$.
Then $\sigma _k(X)$ has dimension $kn+k-1$ and $\sharp (\Ss (q,X)) =1$ for a general $q\in \sigma _k(X)$.
\end{theorem}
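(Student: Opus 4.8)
The plan is to factor the embedding $X\subset\PP^r$ through a Segre product and thereby reduce the identifiability of $X$ to a linear-section statement for the embedding $j\colon M\hookrightarrow\PP(V^*)$. Write $\psi\colon M\to X\subset\PP(W^*)$ for the map attached to $W$ and $\mu\colon V\otimes H^0(\Rr)\to W$ for the (surjective) multiplication map. Dualizing $\mu$ gives an inclusion $W^*\hookrightarrow V^*\otimes H^0(\Rr)^*$, and under it the point $\psi(p)$ becomes the \emph{decomposable} tensor $\mathrm{ev}^V_p\otimes\mathrm{ev}^{\Rr}_p=j(p)\otimes\phi_\Rr(p)$, where $\phi_\Rr\colon M\to\PP(H^0(\Rr)^*)$ is the morphism of the complete linear system $|\Rr|$. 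Hence every point of $X$ has tensor rank one in $V^*\otimes H^0(\Rr)^*$, and a general $q\in\sigma_k(X)$, being in the span of a general $\{p_1,\dots,p_k\}$, is a tensor $q=\sum_{i=1}^k\lambda_i\,j(p_i)\otimes\phi_\Rr(p_i)$ that happens to lie in the subspace $W^*$.

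The identifiability then amounts to recovering the unordered set $\{p_1,\dots,p_k\}$ intrinsically from $q$. I would use the flattening $H^0(\Rr)\to V^*$, $h\mapsto\sum_i\lambda_i\,h(p_i)\,j(p_i)$, which is determined by $q$. Since $|\Rr|$ is complete its image is nondegenerate in $\PP(H^0(\Rr)^*)$, so $k\le h^0(\Rr)$ guarantees that the evaluations $\phi_\Rr(p_1),\dots,\phi_\Rr(p_k)$ are linearly independent for general $p_i$; similarly $k\le\dim V-n-2$ (so that $\dim V-1\ge k+n+1$) makes $j(p_1),\dots,j(p_k)$ independent. Consequently the image of the flattening is exactly $U:=\spa{j(p_1),\dots,j(p_k)}$, a $\PP^{k-1}$ intrinsic to $q$. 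It therefore suffices to establish the following.

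\begin{claim}
Let $Y=j(M)\subset\PP^N$ be the image of $M$, with $N=\dim V-1\ge k+n$; it is integral, nondegenerate and not very strange. Then for general $p_1,\dots,p_k$ one has $Y\cap U=\{j(p_1),\dots,j(p_k)\}$.
\end{claim}

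Granting the Claim, every rank-$k$ decomposition of $q$ must have the same flattening image $U$, hence its first factors lie in $Y\cap U=\{j(p_i)\}$; since $j$ is injective and the decomposition is genuinely of rank $k$, its summands $\psi(p_i)$ are forced, proving $\sharp(\Ss(q,X))=1$. Finally, the $\psi(p_i)=j(p_i)\otimes\phi_\Rr(p_i)$ are linearly independent because the $j(p_i)$ are, so the abstract $k$-secant variety of $X$---a $\PP^{k-1}$-bundle of dimension $kn+k-1$ over $\op{Sym}^kX$---maps birationally onto $\sigma_k(X)$, giving $\dim\sigma_k(X)=kn+k-1$ and non-defectivity at once.

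I expect the Claim to be the main obstacle, and I would prove it by induction on $k$. Projecting from the general point $j(p_k)$ sends $Y$ to an integral, nondegenerate $Y'\subset\PP^{N-1}$ of the same dimension $n$; an extra point $q'\in Y\cap U$ would project into the span of the images of $j(p_1),\dots,j(p_{k-1})$, and the inductive hypothesis for $Y'$ would force $q'\in\spa{j(p_k),j(p_i)}$ for some $i$, i.e.\ $q'$ on a secant line. The base case is precisely the assertion that a general secant line of $Y$ meets $Y$ in exactly two points. This is where positive characteristic bites: the statement is the classical trisecant lemma over $\CC$ but can fail for strange varieties, and ruling this out is exactly the content of the hypothesis that $X$ (equivalently $Y$) is not very strange (Definition \ref{defvs}); some care is needed to check that the reductions used in the induction preserve this property so that the induction closes.
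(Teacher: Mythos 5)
Your reduction is correct and is organized genuinely differently from the paper's. The paper argues by contradiction, cohomologically: if $A\ne B$ were two decompositions of a general $q$, then $Z=A\cup E$, $E=B\setminus A$, would satisfy $h^1(\Ii _Z(1))>0$ by \cite[Lemma 1]{bb}; it then produces a divisor $T\in |V|$ with $A\subset T$ and $T\cap E=\emptyset$, and multiplication by an equation of $T$ maps $H^0(\Ii _E\otimes \Rr)$ isomorphically onto the sections of $W$ vanishing on $T\cup E$, so that $Z$ imposes independent conditions on $W$ --- a contradiction. Your flattening bookkeeping (the image of $q\colon H^0(\Rr)\to V^*$ is intrinsic, equals $\spa{j(p_1),\dots ,j(p_k)}$, and your dimension count correctly forces any competing decomposition to have its $V^*$-factors inside $Y\cap \spa{j(p_i)}$) replaces that conditions-count by multilinear algebra, and it does yield both identifiability and the dimension statement. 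But note that your Claim is not a new lemma: it is literally the paper's Lemma \ref{a10}, since the set-theoretic base locus on $Y$ of the hyperplanes through $S$ is exactly $Y\cap\spa{S}$; and the paper's divisor $T$ exists for the same reason ($j(E)\cap \spa{j(A)}=\emptyset$ plus finiteness of $E$). So the two routes differ only in how they exploit one and the same geometric input about the $V$-embedding.

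The genuine gap is in your proof of that Claim. First, the base case is not ``exactly the content'' of Definition \ref{defvs}: not very strange says that a \emph{general codimension-$n$ linear section} of $Y$ is in linearly general position, and passing from this to ``a general chord of $Y$ meets $Y$ in only two points'' already requires showing that two general points of $Y$ sit inside a general codimension-$n$ section --- an incidence-variety argument (irreducibility of $\{(S,M): S\subset Y\cap M\}$ plus dominance over the Grassmannian, which uses $\deg Y\ge \operatorname{codim} Y+1$). This comparison of ``general subsets'' with ``general sections'' is precisely the kind of statement that uniform position gives for free in characteristic $0$ and that must be re-derived here. Second, your induction step needs ``not very strange'' to be preserved under projection from a general point of $Y$, which you explicitly leave open. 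Both gaps are repairable, but by the very same incidence argument --- and once one has it, it proves the Claim for every $k$ in one stroke, with no induction and no projection: a general $S$ with $\sharp S=k\le \dim V-n-2$ lies inside a general codimension-$n$ section $Y\cap N$, which is in linearly general position, so a point $y\in (Y\cap\spa{S})\setminus S$ would give $k+1\le \operatorname{codim}Y+1$ linearly dependent points of $Y\cap N$, impossible. This is exactly how the paper proves Lemma \ref{a10}. (Also, your Claim as stated assumes only $N\ge k+n$; the argument needs $k\le N-n-1$, which your hypotheses do provide.) In short, the two steps you defer are where all the positive-characteristic content lives, and their honest completion collapses your induction into the paper's direct argument.
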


We will apply Theorem \ref{ii1} to the Segre-Veronese embeddings of a multiprojective space in arbitrary characteristic
(see Example \ref{ii2}), where we do not have an analogue of \cite[Theorem 2.5]{cc2}, hence the analysis
of the previous section does not apply.
Notice that  Theorem \ref{ii1} does not apply to Segre embeddings, because it requires
$\Ll$ very ample (or at least birationally very ample) and $h^0(X,\Rr )>1$.
\smallskip

{The authors wish to thank C. Bocci, for his help to understand the case of almost unbalanced tensors
(see Example \ref{aumb} below) and the anonimous referee for pointing out an improvement in the formula  
after Corollary \ref{ip4}.
}

\section{General notation}

In this section we work with projective spaces $\PP^r$ defined over an algebraically closed field $\KK$.

Let $X\subset \PP^r$ be an integral and non-degenerate $n$-dimensional variety.  We will denote
with $X_{\mathrm{reg}}$ the set of regular points in $X$. For any $x\in X_{\mathrm{reg}}$ we will denote
with $T_xX$ the tangent space to $X$ at $x$.

For any  integer $k\ge 2$ we will denote with $\sigma _k(X)$ the $k$-th secant variety of $X$ which is an irreducible projective variety.
 If $k(n+1)-1 \leq r$, then $k(n+1)-1$ is the expected dimension of $\sigma_k(X)$ and $X$ is {\it $k$-defective}
 if $\dim \sigma _k(X) <k(n+1)-1$.
 
 We will often use the fact that if $X$ is not $k$-defective, then, for all $k'\leq k$, $X$ is not $k'$-defective. 

\begin{definition}
For $q\in \PP^r$ general, the $X$-rank $r_X(q)$ of
a point  $q\in \PP^r$ is the minimal cardinality of a finite set 
$A\subset X$ such that $q\in \spa{ A}$. 

We define $\Ss_X(q)$ as the collection
of subsets $A\subset X$ of cardinality $r_X(q)$ such that $q\in \spa{ A}$.
When $\Ss_X(q)$ is finite, we will write $\alpha_X(q)$ for the cardinality of  $\Ss_X(q)$.
\end{definition}

\begin{remark}
Since $X$ is irreducible, then the secant varieties $\sigma_k(X)$ are irreducible.
Moreover a general point of $\sigma_k(X)$ has $X$-rank $k$.  

By definition, the set $\Ss_X(q)$ is finite for $q\in\sigma_k(X)$ general if and only if 
$\dim \sigma_k(X)=k(n+1)-1 \leq r$. In this case, we will write simply $\alpha_k$ 
for the cardinality of  $\Ss_X(q)$, where $q\in\sigma_k(X)$ is generic. Such an $\alpha_k$ is often called the \emph{$k$-th secant degree of $X$}.
\end{remark}

The following result holds (with the same elementary proof) in arbitrary characteristic.

\begin{proposition}\label{propa1}
Assume $\dim \sigma _{k+1}(X) =(k+1)(n+1) -1 $ and $\alpha _k>1$. Fix a general $q\in \sigma _{k+1}(X)$, 
any $A\in \Ss (q)$ and any $a\in A$. There are at least $\alpha _k$ elements $B_1 \ldots, B_{\alpha_{k}}\in \Ss (q)$ (one of them being $A$) such
that $a\in B_i$ for all $i=1, \ldots , \alpha_k$.
\end{proposition}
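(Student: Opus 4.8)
The plan is to realize a general $q\in\sigma_{k+1}(X)$ as a general point of a secant line $\spa{p,q_k}$ with $p\in X$ and $q_k\in\sigma_k(X)$, and then to convert the $\alpha_k$ minimal decompositions of $q_k$ into minimal decompositions of $q$ containing the prescribed point $a$. Given $A\in\Ss(q)$ and $a\in A$, note first that $r_X(q)=k+1$ forces the $k+1$ points of $A$ to be linearly independent, so $\spa{A}=\PP^k$ and $a\notin\spa{A\setminus\{a\}}$. I then set $p:=a$ and let $q_k:=\spa{a,q}\cap\spa{A\setminus\{a\}}$, the intersection taken inside $\spa{A}$; since $\spa{a,q}$ is not contained in the hyperplane $\spa{A\setminus\{a\}}$ it meets it in a single point $q_k$, which lies in $\sigma_k(X)$ and satisfies $q\in\spa{a,q_k}$.

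First I would record the dimension bookkeeping. From $\dim\sigma_{k+1}(X)=(k+1)(n+1)-1$ and the fact that non-$(k+1)$-defectivity forces non-$k$-defectivity, we get $\dim\sigma_k(X)=k(n+1)-1$, so $\alpha_k$ is finite and well defined. Introducing the incidence variety
$$Z:=\{(p,q_k,w)\,:\,p\in X,\ q_k\in\sigma_k(X),\ w\in\spa{p,q_k}\},$$
the projection $Z\to X\times\sigma_k(X)$ exhibits $Z$ as irreducible with $\dim Z=n+(k(n+1)-1)+1=(k+1)(n+1)-1=\dim\sigma_{k+1}(X)$, so $\psi\colon Z\to\sigma_{k+1}(X)$, $(p,q_k,w)\mapsto w$, is dominant and generically finite.

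The heart of the argument is to show that, for general $q$, the point $q_k$ produced above is a general point of $\sigma_k(X)$, i.e. lies in the open dense locus $U\subseteq\sigma_k(X)$ where exactly $\alpha_k$ decompositions occur. Put $V:=\sigma_k(X)\setminus U$ and $C:=\{(p,q_k,w)\in Z:\ q_k\in V\}$. Then $C$ is a proper closed subset of $Z$, so $\dim\overline{\psi(C)}\le\dim C<\dim Z=\dim\sigma_{k+1}(X)$ and $\psi(C)$ lies in a proper closed subset $S_1\subsetneq\sigma_{k+1}(X)$. Likewise the locus $C'\subseteq Z$ where $p$ belongs to some element of $\Ss_X(q_k)$ has $\dim C'\le\dim\sigma_k(X)+1<\dim Z$, so $\psi(C')$ lies in a proper closed subset $S_2\subsetneq\sigma_{k+1}(X)$. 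For $q\notin S_1\cup S_2$, in particular for $q$ general, the triple $(a,q_k,q)$ lies in $Z\setminus(C\cup C')$, whence $q_k\in U$ and $a\notin D$ for every $D\in\Ss_X(q_k)$.

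To conclude, let $D_1,\dots,D_{\alpha_k}$ be the decompositions of $q_k$, with $D_1=A\setminus\{a\}$ (this is one of them, since $q_k\in\spa{A\setminus\{a\}}$ and $r_X(q_k)=k$). Set $B_i:=D_i\cup\{a\}$. Each $B_i$ has cardinality $k+1$ because $a\notin D_i$, satisfies $q\in\spa{a,q_k}\subseteq\spa{B_i}$, and is therefore a minimal decomposition of $q$; the $B_i$ are pairwise distinct because the $D_i$ are, and $B_1=A$. This produces the required $\alpha_k$ elements of $\Ss(q)$ containing $a$, one of which is $A$. The main obstacle is precisely the genericity step of the third paragraph: one must guarantee that passing from an \emph{arbitrary} pair $(A,a)$ to $q_k$ lands in the locus $U$ where the full count $\alpha_k$ is attained, and this is exactly what the equality $\dim Z=\dim\sigma_{k+1}(X)$ delivers, since it makes the bad loci $C$ and $C'$ fail to dominate $\sigma_{k+1}(X)$.
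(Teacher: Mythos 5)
Your proof is correct and takes essentially the same route as the paper's: both project $q$ from the chosen point $a$ onto $\spa{A\setminus\{a\}}$, show by a dimension count that the resulting point $q_k$ is general in $\sigma_k(X)$, and then lift the $\alpha_k$ minimal decompositions of $q_k$ back to decompositions of $q$ containing $a$. The only differences are in packaging: the paper obtains the genericity of $q_k$ by noting that every $D\in\Ss(q)$ is a general point of the $(k+1)$-st symmetric product of $X$ (your incidence variety $Z$ just makes that same count explicit), and it excludes $a\in D_i$ by the one-line observation that otherwise $q\in\spa{D_i}$ would give $r_X(q)\le k$, rather than by your second bad locus $C'$.
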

\begin{proof}
Since $\dim \sigma _{k+1}(X) =(k+1)(n+1) -1$, we have $\dim \sigma _k(X) =k(n+1) -1$ 
and hence $\alpha _k$ is a well-defined finite positive integer. Fix a general $q\in \sigma _{k+1}(X)$ 
and take any $A\in \Ss (q)$ and any $a\in A$.  Set $E:= A\setminus \{a\}$. Since $q$ is general 
in $\sigma _{k+1}(X)$ and $\dim \sigma _{k+1}(X) =(k+1)(n+1) -1$,
a dimensional count gives that for each $D\in \Ss (q)$ the set $D$ is a general element of the 
$(k+1)$-symmetric product of $X$.
Thus $E$ is a general subset of $X$ with cardinality $k$. Since $\sharp (A) =r_X(q)$, we have 
$q\notin \spa{ A'}$ for any $A'\subsetneq A$; in particular $q\notin E$. Thus $\spa{ \{a,q\}}$ is a line, which meets 
$\spa{E}$ in a unique point $q'$ (because $\langle E \rangle + \langle a,q \rangle= \langle A \rangle$) and $q'\notin \spa{E'}$ for any $E'\subsetneq E$. 
For a general $q\in \spa{A}$, $q'$ is a general element of $\spa{E}$. Since $E$ is general, 
$q'$ is general in $\sigma _k(X)$. Thus $r_X(q')=k$, $E\in \Ss (q')$ and $\sharp (\Ss (q')) = \alpha _k>1$.
Fix any $F\in \Ss (q')$. We have $q\in \spa{ \{F,a\}}$. Since $r_X(q) =k+1$, 
we have $a\notin F$ and $F\cup \{a\}\in \Ss (q)$. Now we have exactly $\alpha_k$ such $F$'s, say $F_1, \ldots , F_{\alpha_{k}}$, 
the $B_i$ of the statement are $B_i=\{F_i,a\}$ for $i=1, \ldots , \alpha_k$ concluding the proof.
\end{proof}

\section{Weak defectivity and identifiability}\label{charo}

In this section, {as well as in the next two sections,} we assume that the algebraically closed base field $\KK$ has characteristic $0$.
\smallskip

For a subset $A$ of cardinality $k$ contained in the regular locus $X_{\mathrm{reg}}$ we set:
$$M_A:= \spa{ \cup _{x\in A} T_xX}.$$ 
By Terracini's lemma (\cite[Corollary 1.11]{a1}), for a general choice of $A$ the space $M_A$
is the tangent space to $\sigma_k(X)$ at a general point $u\in\spa{ A}$. Thus $\dim M_A = \min \{ k(n+1)-1, \dim \langle X \rangle\}$
for $A$ general, unless $X$ is $k$ defective.

\begin{definition}\label{notazione}
The \emph{tangential $k$-contact locus} $\Gamma _k(A)$  is the closure in $X$ of the union of all the irreducible components which contain 
at least one point of $A$, of the locus of points of $X_{\mathrm{reg}}$ where $M_A$ is tangent to $X$.
\\
We will write $t_k:=\dim \Gamma _k(A)$ for a general choice of $A$.
\\
We say that $X$ is \emph{weakly $k$-defective} if $t_k >0$.
\\
We will write just $\Gamma _k$ instead of $\Gamma _k(A)$, when $A$ is general.
\end{definition}

With the terminology just introduced we have the following remark:
\begin{remark}\label{rem:catena:weak}\cite[Remark 3.1(iii) at p. 159]{cc1} If $\sigma_{k+1}(X) \subsetneq \mathbb{P}^r$ 
has the expected dimension $(k+1)\dim X+(k+1)-1<r$ and $X$ is weakly $k$-defective, then $X$ is also weakly $(k+1)$-defective.
\end{remark}

We will need the following result of \cite{cc3}.

\begin{proposition}
For a general choice of $A\subset X_{\mathrm{reg}}$, the algebraic set $\Gamma _k:=\Gamma _k(A)$ is equidimensional 
and either it is irreducible or it has exactly $k$ irreducible components,  each of them containing a different point of $A$.
\end{proposition}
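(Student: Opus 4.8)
The plan is to pair the $k$ points of $A$ with the irreducible components of $\Gamma_k(A)$ and then exploit the manifest $S_k$-symmetry of the construction through a monodromy argument; the whole statement will follow from the fact that the only $S_k$-invariant partitions of a $k$-element set are the two trivial ones.

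First I would record the elementary bookkeeping. For every $a\in A$ we have $T_aX\subseteq M_A$, so $M_A$ is tangent to $X$ at $a$ and therefore $A\subseteq \Gamma_k(A)$; conversely, by Definition~\ref{notazione} every irreducible component of $\Gamma_k(A)$ meets $A$. Thus, if for general $A$ each point $a\in A$ lies on a \emph{unique} component of $\Gamma_k(A)$ --- which I would establish by showing that a general $a\in A$ is a smooth point of the contact locus --- one obtains a well defined surjection $\psi\colon A\twoheadrightarrow \{\text{components of }\Gamma_k(A)\}$ sending $a$ to the component through it. The number of components is then the number of blocks of the partition of $A$ into the fibres of $\psi$, and it is at most $k$. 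The proposition amounts to the assertion that this partition is either the partition into singletons (giving $k$ components, each through one point of $A$) or the one-block partition (giving irreducibility), together with equidimensionality.

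Next I would globalize over configurations. Let $\Uu\subseteq X_{\mathrm{reg}}^{\,k}$ be the dense open set of ordered $k$-tuples with distinct entries that are general in the above sense; since $X$ is irreducible, $X^k$ and hence $\Uu$ is irreducible, so the Galois covering $\Uu\to \Uu/S_k$ has full monodromy group $S_k$. Form the incidence variety $\Zz=\overline{\{(\mathbf{x},y)\in \Uu\times X:\ y\in \Gamma_k(\{x_1,\dots,x_k\})\}}$ and let $p\colon \Zz\to \Uu$ be the first projection, whose general fibre is $\Gamma_k(\mathbf{x})$; the tautological maps $\mathbf{x}\mapsto (\mathbf{x},x_i)$ are sections landing, fibrewise, in the component through $x_i$. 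Over the unordered base the marked points and the fibre-components are simultaneously transported along loops: a loop inducing the permutation $\sigma\in S_k$ of the marked points induces a permutation $\tau$ of the set $F$ of fibre-components of $\Gamma_k$, and compatibility of $\psi$ with this transport reads $\psi\circ\sigma=\tau\circ\psi$. As $\sigma$ ranges over all of $S_k$ (full monodromy), this says exactly that every $\sigma\in S_k$ preserves the fibres of $\psi$; hence the partition of $\{1,\dots,k\}$ induced by $\psi$ is $S_k$-invariant.

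The conclusion is then purely combinatorial: since $S_k$ is $2$-transitive for $k\ge 2$, it acts primitively, so its only invariant partitions of $\{1,\dots,k\}$ are the partition into singletons and the single-block partition. In the single-block case all fibres of $\psi$ coincide, so $\Gamma_k$ is irreducible (hence trivially equidimensional). In the singleton case $\Gamma_k$ has exactly $k$ components, each containing precisely one point of $A$; moreover the transport realizes a transitive permutation of these $k$ components, and transport preserves dimension, so all components are equidimensional. The main obstacle I anticipate is not the symmetry argument but the preliminary genericity input used to define $\psi$: one must show that for general $A$ each point of $A$ is a smooth point of $\Gamma_k(A)$ lying on a single component, which is the only step requiring the local geometry of the contact locus rather than the $S_k$-symmetry. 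Making the informal ``simultaneous transport'' precise --- identifying the monodromy on $F$ with the action coming from $S_k$ via the compatibility $\psi\circ\sigma=\tau\circ\psi$, after discarding the components of $\Zz$ that do not dominate $\Uu$ --- is the second point that needs care.
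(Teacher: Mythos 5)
Your monodromy skeleton is sound, and it is in fact the same style of argument the paper itself uses elsewhere (see Remark \ref{b1}): full symmetric monodromy on $k$ general points of an irreducible variety, plus primitivity of $S_k$, forces any invariant partition to be trivial, and transitivity of the transport gives equidimensionality. Note, however, that the paper does not prove this Proposition at all: its ``proof'' is a citation of \cite[Proposition 3.9]{cc3} (with a remark on the index shift), so the honest comparison is with Chiantini--Ciliberto's argument --- and that is exactly where your gap sits.

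The genuine gap is the well-definedness of your map $\psi$: you need that, for general $A$, each point $a\in A$ lies on a \emph{unique} irreducible component of $\Gamma_k(A)$ (equivalently, that $A$ lies in the smooth locus of $\Gamma_k(A)$, or at least that $\Gamma_k(A)$ is unibranch at each point of $A$). You flag this and say you ``would establish'' it, but no argument is given, and it is not a routine genericity statement: a priori the tangency locus $\{x\in X_{\mathrm{reg}}: T_xX\subseteq M_A\}$ could, for \emph{every} general $A$, have several branches crossing at each $a_i$, and generic smoothness of the total space of your incidence variety $\Zz$ does not help, because your sections $\mathbf{x}\mapsto(\mathbf{x},x_i)$ may land precisely in the locus where the fibers are singular. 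This smoothness-at-the-marked-points statement is the substantive local-geometric content of \cite[Proposition 3.9]{cc3} (building on \cite[Theorem 1.4]{cc1}), and the present paper imports it again as an unproved input in the proof of Lemma \ref{a2} (``$A$ is in the smooth locus of $\Gamma_{k+1}$''). A sanity check that all the content is concentrated there: for $k=1$ your symmetry argument is empty ($S_1$ is trivial), yet the Proposition still asserts that $\Gamma_1$ is irreducible --- which is exactly the claim you left unproved. So as it stands the proposal reduces the Proposition to its hardest step rather than proving it; the block-system argument and the transport-preserves-dimension argument (via Stein factorization of the components of $\Zz$ dominating $\Uu$) are correct, but they are the easy half.
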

\begin{proof} It is Proposition 3.9 of \cite{cc3}. (Notice that our $\sigma_k(X)$ is called the
{\it $(k-1)$-th secant variety} in \cite{cc3}, so that all the $k$'s  there should be here replaced by $k+1$, in order
to apply the results of \cite{cc3} to our terminology). 
\end{proof}

\begin{definition}\label{types}
Following \cite{cc2} and \cite{cc3}, we say that $\Gamma_k$ is of the \emph{type I} (resp. \emph{type II}) 
if $\Gamma _k$ is irreducible (resp. it is not irreducible) for a general choice of $A$. 
\end{definition}

The fundamental technical results of the present paper is the following lemma.
It shows how one can control the growth of the dimension of contact loci.
Mainly, one must take care of the case of contact loci of type II, which are not irreducible.

\begin{lemma}\label{a2} Let $X\subset \mathbb{P}^r$ be an irreducible projective variety such that  
$\dim \sigma _{k+1}(X) =(k+1)(n+1) -1 <r$ and assume that $X$ is weakly $k$-defective. Then $X$ is weakly $(k+1)$-defective.
Let $B$ (resp. $A$) be a general subset of $k$ (resp. $k+1$) points of $X$. Call $\Gamma _k$ (resp. $\Gamma _{k+1}$) the tangential 
contact locus of $X$ at the points of $B$ (resp. $A$). 
Then either:
 
 \begin{itemize}
 \item $\dim \Gamma _k<\dim \Gamma _{k+1}$; or 
 
 \item both $\Gamma _k$ and $\Gamma _{k+1}$ have type II, each of the irreducible components
of $\Gamma _k$ and $\Gamma _{k+1}$ is a linear space and these components are linearly independent, i.e.
$$\dim \spa{ \Gamma _{k+1}} =\dim \spa{\Gamma _k} +\dim \Gamma _k+1 = (k+1)(\dim \Gamma _k +1)-1. $$
\end{itemize}
\end{lemma}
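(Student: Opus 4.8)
The plan is to separate the statement into the weak-defectivity claim, a dimension dichotomy, and two structural assertions, and to reduce as much as possible to a clean tangent-space count. Since $\dim\sigma_{k+1}(X)=(k+1)(n+1)-1<r$ and $X$ is weakly $k$-defective, Remark \ref{rem:catena:weak} already gives that $X$ is weakly $(k+1)$-defective, so $\Gamma_{k+1}$ is positive-dimensional and that part needs no further work. Because $B$ and $A$ are general I would take them nested, $A=B\cup\{p\}$ with $p$ a general point, so that $M_B\subseteq M_A$ and $M_A=\spa{M_B\cup T_pX}$. As $M_B\subseteq M_A$, every point at which $M_B$ is tangent to $X$ is a point at which $M_A$ is tangent; following the components through the points of $B\subseteq A$ yields $\Gamma_k\subseteq\Gamma_{k+1}$ and hence $\dim\Gamma_k\le\dim\Gamma_{k+1}$. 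If the inequality is strict we are in the first alternative and there is nothing more to do, so I would assume from now on that $t:=\dim\Gamma_k=\dim\Gamma_{k+1}$.

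Under this no-growth assumption each component of $\Gamma_k$ is forced to be an actual component of $\Gamma_{k+1}$: an irreducible $t$-dimensional component of $\Gamma_k$ is contained in, and (by equal dimension, both irreducible, using that $\Gamma_{k+1}$ is equidimensional) equal to, a component of $\Gamma_{k+1}$. I would then rule out type I. If $\Gamma_k$ were irreducible it would contain all of $B$ and coincide with a single component $Z$ of $\Gamma_{k+1}$; were $\Gamma_{k+1}$ also irreducible then $Z=\Gamma_k=\Gamma_{k+1}\ni p$, contradicting that the general point $p$ cannot lie on the proper subvariety $\Gamma_k$ (note $t\le n-1$), while if $\Gamma_{k+1}$ were of type II then $Z$ would carry the $k\ge 2$ general points of $B$, contradicting that each type II component contains a single general point of $A$. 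The symmetric bookkeeping excludes $\Gamma_{k+1}$ of type I when $\Gamma_k$ is of type II. Hence both loci are of type II, and the $k+1$ components of $\Gamma_{k+1}$ are exactly the components $\Gamma_{k,1},\dots,\Gamma_{k,k}$ of $\Gamma_k$ (through $p_1,\dots,p_k$) together with one further component $\Gamma'$ through $p$.

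Granting linearity of the components, I expect the independence statement to fall out cleanly, and I would argue it as follows. For any subvariety $Y\subset\PP^r$ one has $T_yY\subseteq\spa{Y}$, so for each $i$ the $t$-dimensional tangent space $T_{p_i}\Gamma_{k,i}$ lies in $\spa{\Gamma_{k,i}}$; linearity means precisely $\spa{\Gamma_{k,i}}=T_{p_i}\Gamma_{k,i}\subseteq T_{p_i}X$, and likewise $\spa{\Gamma'}\subseteq T_pX$. Since $\dim\sigma_{k+1}(X)=(k+1)(n+1)-1$, the variety $X$ is neither $k$- nor $(k+1)$-defective, so the spaces $T_{p_1}X,\dots,T_{p_k}X,T_pX$ are linearly independent and span $M_A$. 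As each component span sits inside a distinct one of these independent tangent spaces, the spans $\spa{\Gamma_{k,1}},\dots,\spa{\Gamma_{k,k}},\spa{\Gamma'}$ are themselves independent, whence $\dim\spa{\Gamma_k}=k(t+1)-1$ and $\dim\spa{\Gamma_{k+1}}=(k+1)(t+1)-1=\dim\spa{\Gamma_k}+t+1$, which is exactly the asserted identity.

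The real obstacle is therefore the linearity of the components in the no-growth case, equivalently the minimality $\dim\spa{\Gamma_{k,i}}=t$. My approach would be to project $X$ from $M_B$, say $\pi\colon\PP^r\dashrightarrow\PP^{r-k(n+1)}$ with image $Y:=\overline{\pi(X)}$. Because $M_A=\spa{M_B\cup T_pX}$ is a direct sum, $\pi$ is generically finite and an immersion at the general point $p$, so $T_{\bar p}Y=\pi(T_pX)=\pi(M_A)$; applying $\pi$ to the tangency $T_xX\subseteq M_A$ for $x\in\Gamma'$ shows $T_{\bar x}Y\subseteq T_{\bar p}Y$ along $\pi(\Gamma')$, i.e. $Y$ has a degenerate Gauss map whose general fibre contains the positive-dimensional $\pi(\Gamma')$. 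In characteristic $0$ the general Gauss fibre is a linear space (Griffiths--Harris/Zak), which constrains $\pi(\Gamma')$, and the delicate point---the part I expect to require the structural analysis of contact loci of \cite{cc2,cc3}---is to transfer this linearity back through $\pi$ to conclude that $\Gamma'$, and by the monodromy permuting the general points of $A$ each $\Gamma_{k,i}$, is linear of dimension $t$. Once this is established, the count of the previous paragraph closes the proof.
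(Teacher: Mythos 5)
Your proposal gets the scaffolding right and in one place is even cleaner than the paper: the reduction via $A=B\cup\{p\}$ to $\Gamma_k\subseteq\Gamma_{k+1}$, the exclusion of type I in the no-growth case, and the independence count \emph{granted linearity} (a linear component through $p_i$ lies in $T_{p_i}X$, and Terracini plus non-$(k+1)$-defectivity makes $T_{p_1}X,\dots,T_{p_k}X,T_pX$ linearly independent) are all sound; the paper instead extracts independence from the span formulas of \cite[Proposition 3.9]{cc3}. But the heart of the lemma --- that in the equal-dimension, type II case each component of $\Gamma_k$ and $\Gamma_{k+1}$ is a \emph{linear space} --- is precisely the step you do not prove. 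You concede this yourself: the ``delicate point'' of transferring linearity back through the projection $\pi$ from $M_B$ is deferred, and ``once this is established'' the proof closes. That deferred point is the entire content of the second alternative of the dichotomy, so as written the proposal has a genuine gap. Worse, the transfer you hope for is intrinsically problematic: even granting that $\pi(\Gamma')$ lies in a linear Gauss fibre $\Lambda$ of $Y=\overline{\pi(X)}$, this only gives $\Gamma'\subseteq X\cap\langle\Lambda\cup M_B\rangle$, and a subvariety of a linear space need not be linear --- nothing in the Gauss-map picture rules out $\Gamma'$ being a nondegenerate curve in that span mapping onto a line. There are also unaddressed hypotheses in the sketch (that general points of $\pi(\Gamma')$ are smooth on $Y$ and that $\pi$ is an immersion there), which cannot be waved away since points of $\Gamma'$ other than $p$ are not general points of $X$.

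The paper closes this gap with a self-contained dimension count that your proposal never touches, and which needs no projection or Gauss map. Set $\Pi^i=\langle\Gamma^i\rangle$ and $L_i=\langle\Pi^1\cup\dots\cup\Pi^i\rangle$, and suppose for contradiction that $\dim\Pi^i>t$. Since $X$ is not $i$-defective, \cite[Proposition 2.5]{cc2} gives $\dim L_i\ge it+i-1$ for all $i$, while \cite[Proposition 3.9]{cc3} pins down the exact values $\dim L_k=kt+k-1$ and $\dim L_{k+1}=(k+1)t+k$. A descending Grassmann-formula argument (if $\dim L_{k-1}$ exceeded its lower bound, then $\Pi^{k+1}$ would meet $L_k$ too much and force $\dim L_{k+1}<(k+1)t+k$) shows $\dim L_i=it+i-1$ for every $i$; in particular $\dim(\Pi^1\cap\Pi^2)=2\dim\Pi^1-2t-1\ge 0$, so the spans of two components meet, and by generality every $\Pi^i\cap\Pi^j$ has this dimension. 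Feeding that intersection into the Grassmann formula for $L_3$ yields $\dim L_3<3t+2$, contradicting $\dim L_3\ge 3t+2$. This contradiction proves linearity, and the exact values of $\dim L_k,\dim L_{k+1}$ then give the independence statement. So your skeleton is compatible with the paper's, but the key idea --- the interplay between the non-defectivity lower bounds and the contact-locus span formulas via the Grassmann formula --- is missing, and the Gauss-map route you propose in its place does not, as sketched, deliver it.
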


\begin{proof}
Since $\dim \sigma _{k+1}(X) =(k+1)(n+1) -1$, we have $\dim \sigma _k(X) =k(n+1) -1$. 
By Remark \ref{rem:catena:weak}, $X$ is weakly $(k+1)$-defective. By \cite[first two lines after Definition 1.2]{cc1}, 
$X$ is not $i$-defective for $i=1,\dots,k+1$.

By \cite[Proposition 3.9]{cc3} we know that both $\Gamma _k$ and 
$\Gamma _{k+1}$ are equidimensional, that $A$ is in the smooth locus of $\Gamma _{k+1}$, that $B$ is in the smooth locus
of $\Gamma _k$ and that either $\Gamma _k$ (resp. $\Gamma _{k+1}$) is irreducible or it has exactly $k$ (resp. $k+1$) 
irreducible components,  each of them containing exactly one point of $B$ (resp. $A$). 

Since $B$ is general, we may assume that $B\subset A$. Thus $\Gamma _k\subseteq \Gamma_{k+1}$.
Since $\Gamma_k$ is a proper subvariety of $X$ and $\Gamma _{k+1}$ contains $B$ plus a general point of $X$, then
 $\Gamma _k\neq \Gamma _{k+1}$. Thus if either $\Gamma _k$ or $\Gamma _{k+1}$ is irreducible, then
 $t_k<t_{k+1}$ (where $t_k$ is defined in Definition \ref{notazione}).
 
Now assume that both $\Gamma _k$ and $\Gamma _{k+1}$ are reducible, hence of type II, and that $t_k=t_{k+1}=:t$. 
Let $B=\{P_1,\dots,P_k\}$
and $A=\{P_{k+1}\}\cup B$. Call $\Gamma^i$ the component of $\Gamma_{k+1}$ passing through $P_i$, so that also
$\Gamma_k=\Gamma^1\cup\dots\cup\Gamma^k$. Call ${\Pi^i}$ the linear span of $\Gamma^i$. Of course, by the generality of $A$, 
all the ${\Pi^i}$'s have the same dimension. Assume that $\dim {\Pi^i}>\dim \Gamma^i$ and let us show that we get a contradiction. 

For $i=2,\dots,k+1$ call $L_i$ the span of ${\Pi^1\cup\dots\cup\Pi^i}$, which is also the span of $\Gamma^1\cup\dots\cup\Gamma^i$.
By  \cite[Proposition 2.5]{cc2} we know that, for all $i$, $\dim(L_i)\geq it+ i-1$, since $X$ is not $i$-defective.
Moreover by \cite[Proposition 3.9]{cc3} we know that $\dim(L_k)=kt+k-1$ and $\dim(L_{k+1})=(k+1)t+k$.
If $\dim(L_{k-1})>(k-1)t+k-2$, then by Grassmann formula, the intersection of $L_{k-1}$ and $\Pi_k$ has dimension
greater than $(k-1)t+k-2+\dim(\Pi_k)-kt-k+1=\dim(\Pi_k)-t-1$, so that, by Grassmann formula again, 
$$\dim(L_{k+1})< \dim(L_k)+\dim(\Pi_{k+1})-\dim(\Pi_i)+t+1=(k+1)t+k,$$
a contradiction. Thus $\dim(L_{k-1})=(k-1)t+k-2$. Arguing by induction, we obtain that $\dim(L_i)=it+i-1$
for $i=2,\dots,k+1$. In particular $\dim({\Pi^1\cap\Pi^2})=2\dim({\Pi^1})-2t-1\geq 0$. By the generality of $A$ again,
$\dim({\Pi^i\cap\Pi^j})=\dim({\Pi^1\cap\Pi^2})$ for all $i,j$. Thus
${\Pi^3}$ intersects $L_2$ in dimension at least  $\dim{\Pi^1\cap\Pi^2})$. It follows that $L_3$ has dimension 
$$\dim(L_3)\leq \dim(L_2)+\dim({\Pi^3})-2\dim({\Pi^1})+2t+1=(2t+1)-\dim({\Pi^i})+2t+1<3t+2,$$
which yields the claimed contradiction.
\end{proof}

{
\section{Remarks on the secant degree of contact loci}\label{secdeg}

We collect in this sections some remarks on the general {\it tangential}  contact loci $\Gamma_k$ of $X$, which clarify the relations
between the $k$-th secant degree of $X$ and the $k$-th secant degree of a general $\Gamma_k$.
To do that, we need to distinguish the case where $\Gamma_k$ is of type I (i.e. irreducible) from the case where
$\Gamma_k$ is of type II.
\smallskip

\begin{notation}
{\it For the rest of the section, let $X\subset \PP^r$ be an integral and non-degenerate $n$-dimensional variety 
and let $k$ be a positive integer.  Assume that $X$ is weakly $k$-defective with
$$\dim \sigma _{k+n-1}(X) ={(k+n-1)}(n+1)-1< r. $$ 
Call $\Gamma_k$ the tangential $k$-contact locus of $X$ at a general set $A=\{P_1,\dots,P_k\}$ of $k$ points of $X$,
and call ${\Pi_k}$ the linear span of $\Gamma_k$. }
\end{notation}

The following technical result is the analogue of \cite[Theorem 2.4]{cc2} for the tangential contact locus $\Gamma_k$.
It is implicitly contained in part (iii) of \cite[Proposition 3.9]{cc3}, but we prefer to make it explicit here.

\begin{lemma}\label{gamma} For a general $Q\in\spa{A}\subset{\Pi_k}$, all subsets $B\subset X$ of cardinality $k$ such that $Q\in\spa{B}$ are
contained in the span of $\Gamma_k$. 
\end{lemma}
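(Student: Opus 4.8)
The plan is to show that any minimal $X$-decomposition $B$ of a general point $Q\in\spa{A}$ must be forced to lie inside $\Pi_k=\spa{\Gamma_k}$, exploiting the weak $k$-defectivity of $X$ together with the fact that the $(k+n-1)$-secant variety has the expected dimension. The guiding principle is Terracini: for a general set $A$ of $k$ points, the tangent space $M_A$ is tangent to $X$ exactly along $\Gamma_k$, and this same tangency must be respected by any competing decomposition $B$.

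\textbf{The main steps.}

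First I would fix $B=\{R_1,\dots,R_k\}\subset X$, distinct from $A$ in general, with $Q\in\spa{B}$ and $\sharp(B)=k$ minimal. Since $Q\in\spa A$ is general and $\dim\sigma_k(X)=k(n+1)-1$, by Terracini's lemma the hyperplanes containing $M_A$ are exactly the hyperplanes tangent to $X$ at all $k$ points of $A$; these are precisely the hyperplanes $H$ containing $T_{P_i}X$ for all $i$. The key observation is that any such $H$ automatically contains $\spa A$, hence contains $Q$, and therefore — since $Q\in\spa B$ and $H$ is a general such tangent hyperplane — interacts with $M_B=\spa{\cup_{j}T_{R_j}X}$.

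Second, and this is the crux, I would invoke the defining property of the contact locus: a general hyperplane $H\supseteq M_A$ is tangent to $X$ not only at $A$ but along all of $\Gamma_k$. The plan is to run the symmetry argument of \cite[Theorem 2.4]{cc2}: a general tangent hyperplane $H$ at $A$ is also tangent at $B$, because $Q\in\spa A\cap\spa B$ and the family of tangent hyperplanes through $M_A$ has the same dimension as the family through $M_B$ (both equal $r-\dim M_A-1$ since $X$ is not $k$-defective). From this one deduces that each $R_j$ is a point where the general $H\supseteq M_A$ is tangent to $X$, i.e.\ $R_j$ lies in the tangency locus of $M_A$. Since the components of that tangency locus passing through a point of $A$ are exactly $\Gamma_k$, I would need to argue that the $R_j$ lie on these same components and not on extraneous ones — this is where the generality of $A$ and the equidimensionality from \cite[Proposition 3.9]{cc3} enter.

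\textbf{The expected obstacle.}

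The hard part will be step two: ruling out that some $R_j$ lands on a component of the full tangency locus of $M_A$ that does \emph{not} pass through a point of $A$, since by \emph{definition} $\Gamma_k$ only captures the components meeting $A$. I expect to handle this by a monodromy/irreducibility argument on the incidence variety parametrizing pairs $(A,Q)$ together with a competing decomposition $B$, showing that as $A$ varies the points of $B$ must vary within components through $A$; alternatively one reduces directly to the statement of \cite[Proposition 3.9(iii)]{cc3}, which is asserted to contain this implicitly. Once $R_j\in\Gamma_k$ for all $j$ is established, $B\subset\Gamma_k\subset\Pi_k$ follows immediately, completing the proof.
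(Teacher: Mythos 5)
The serious problem is in your final step. The lemma claims only that $B$ is contained in the \emph{span} $\Pi_k$ of $\Gamma_k$, not that $B\subset\Gamma_k$; yet your main plan is to prove, via a monodromy/incidence-variety argument, that every $R_j$ lies on the components of the tangency locus of $M_A$ passing through $A$, i.e.\ on $\Gamma_k$ itself. That statement is false in general, so no monodromy argument can establish it. The paper's own Example~\ref{aumb} (the Segre embedding of $\PP^2\times\PP^2\times\PP^5$ with $k=5$) is a counterexample: there $\Pi_5$ is tangent to $X$ along $\Gamma_5\cup\Gamma$ with $\Gamma\not\subset\Gamma_5$, and a general $Q\in\spa{A}$ has six decompositions, five of which use a point of the extra component $\Gamma$; those decompositions lie in $\Pi_5$ but not in $\Gamma_5$. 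Hence the only workable version of your step is the fallback you mention in passing, namely quoting \cite[Proposition 3.9 (iii)]{cc3} --- which is exactly what the paper's proof does. In other words, the entire content of what you call ``the hard part'' \emph{is} the cited result; it cannot be recovered by the argument you propose, because the stronger statement that argument targets is false.

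There is also a gap in your step two. You transfer tangency from $A$ to $B$ by observing that the families of hyperplanes containing $M_A$ and containing $M_B$ have the same dimension; equality of dimensions of two linear systems does not make them coincide, and in any case $\dim M_B=k(n+1)-1$ is not known a priori, since $B$ is not known to be a general $k$-tuple of points of $X$. The correct (and much shorter) argument, which is essentially the paper's whole proof, runs as follows: since $A$ and $Q$ are general, $Q$ is a general --- hence smooth --- point of $\sigma_k(X)$, so $T_Q\sigma_k(X)$ has dimension $k(n+1)-1$ and equals $M_A$ by Terracini's lemma; on the other hand, the containment part of Terracini's lemma (coming from the differential of the addition map) applies to the decomposition $B$ as well and gives $M_B\subseteq T_Q\sigma_k(X)$. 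Thus $M_B\subseteq M_A$, so $M_A$ is tangent to $X$ at every point of $B$, and then \cite[Proposition 3.9 (iii)]{cc3} concludes that $B\subset\Pi_k$.
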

\begin{proof} Notice that, by the generality of $A$, $Q$ is also a general point in the secant variety $\sigma_k(X)$.
Since $Q$ is general, the tangent space to $\sigma_k(X)$ at $Q$ is also the span of the tangent spaces to $X$ at the points of $B$,
by the Terracini's Lemma. The claim now follows from \cite[proposition 3.9 (iii)]{cc3}.
\end{proof}

\begin{corollary}\label{corollary:sec:deg} 
The $k$-secant degree of a projective variety $X$ is equal to the $k$-secant degree of the intersection of $X$ with the
span of  its tangential $k$-contact locus.
\end{corollary}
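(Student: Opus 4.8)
The plan is to compare the minimal decompositions of a single, carefully chosen general point, exploiting Lemma~\ref{gamma}. Write $Y:=X\cap\Pi_k$, so that $\Gamma_k\subseteq Y\subseteq\Pi_k=\spa{\Gamma_k}$; in particular $Y$ is non-degenerate in $\Pi_k$ and the general set $A=\{P_1,\dots,P_k\}$ defining $\Gamma_k$ is contained in $Y$. Fix $Q\in\spa A$ general. As recorded in the proof of Lemma~\ref{gamma}, $Q$ is then a general point of $\sigma_k(X)$, and since the hypotheses force $X$ to be not $k$-defective we have $r_X(Q)=k$; thus $\Ss_X(Q)$ is exactly the finite set of $B\subset X$ with $\sharp B=k$ and $Q\in\spa B$, of cardinality equal to the $k$-th secant degree $\alpha_k$ of $X$.

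First I would prove the set-theoretic equality $\Ss_X(Q)=\Ss_Y(Q)$. The inclusion $\Ss_X(Q)\subseteq\Ss_Y(Q)$ is the substance of Lemma~\ref{gamma}: each such $B$ lies in $\spa{\Gamma_k}=\Pi_k$, hence in $X\cap\Pi_k=Y$. For the reverse inclusion one only needs $r_Y(Q)=k$: since $Y\subseteq X$ one has $r_Y(Q)\ge r_X(Q)=k$, while $A\subset Y$ already exhibits a decomposition of $Q$ of cardinality $k$, forcing equality; thus every $B\subset Y$ with $\sharp B=k$ and $Q\in\spa B$ is automatically a minimal $X$-decomposition. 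Consequently $\alpha_X(Q)=\sharp\Ss_X(Q)=\sharp\Ss_Y(Q)=\alpha_Y(Q)$.

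It remains to identify $\alpha_Y(Q)$ with the $k$-th secant degree of $Y$, i.e. to show that $Q$ realizes the \emph{generic} number of minimal $Y$-decompositions. This is the step I expect to be the main obstacle. The clean way is to check that $\spa A$ is a general secant $(k-1)$-plane to $Y$ and then to invoke, for $Y$ in place of $X$, the same incidence/genericity argument used in the proof of Lemma~\ref{gamma}: for a general $k$-subset of a variety and a general point of its span one lands at a general point of the $k$-th secant variety. Concretely, the number of minimal $Y$-decompositions is constant, equal to the secant degree of $Y$, on a dense open subset $U\subseteq\sigma_k(Y)$, and one must rule out that the secant planes $\spa A$ arising from general $k$-subsets $A\subset\Gamma_k$ all fall into the closed complement $\sigma_k(Y)\setminus U$; equivalently, one must show that $\sigma_k(\Gamma_k)$ is not contained in this bad locus. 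I would deduce this from the structure of the contact locus in \cite[Proposition 3.9]{cc3}, using that $\Gamma_k$ already spans $\Pi_k$ and that $A$ is a general, smooth $k$-tuple of $\Gamma_k$, so that the associated secant planes dominate the generic behaviour of $\sigma_k(Y)$ and $Q$ meets $U$. Once this genericity is established, $\alpha_Y(Q)$ is by definition the $k$-th secant degree of $Y$, and the chain $\alpha_k=\alpha_X(Q)=\alpha_Y(Q)$ yields the statement.
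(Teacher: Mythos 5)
Your first two steps reproduce exactly the paper's own (implicit) argument: the paper gives this corollary no proof at all, presenting it as an immediate consequence of Lemma~\ref{gamma}, namely the set-theoretic identification $\Ss_X(Q)=\Ss_Y(Q)$ for $Q$ general in $\spa{A}$, where $Y=X\cap\Pi_k$ (one inclusion from the lemma, the other from $r_Y(Q)=r_X(Q)=k$). Where you go beyond the paper is your third step, and you are right that it is the genuine issue hidden in the statement: $Q$ is general in $\sigma_k(X)$, but inside $\sigma_k(Y)$ it is confined to the $(k-1)$-plane $\spa{A}$, which is in general a very small subvariety of $\sigma_k(Y)$ (for instance in Example~\ref{aumb} one has $\spa{A}\cong\PP^4$ inside $\sigma_5(Y)=\Pi_5=\PP^{24}$), so genericity in $\sigma_k(X)$ does not transfer to genericity in $\sigma_k(Y)$, and identifying $\alpha_Y(Q)$ with the generic secant degree of $Y$ does require an argument that the paper never supplies.

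However, your proposed resolution of that step is a plan, not a proof, and it rests on two claims that are asserted rather than established. First, you treat $A$ as ``a general, smooth $k$-tuple of $\Gamma_k$''; to use this one must show that a general $k$-tuple $A'$ of $\Gamma_k$ (one point per component in the type II case) is again a defining tuple for the \emph{same} contact locus, i.e.\ $\Gamma_k(A')=\Gamma_k$. Tangency only gives $M_{A'}\subseteq M_A$, and since $A'$ is not a general tuple of $X$, Terracini does not give $\dim M_{A'}=k(n+1)-1$ for free; some additional argument (via the family of contact loci, or the structure results of \cite{cc3}) is needed. Second, for ``the associated secant planes dominate the generic behaviour of $\sigma_k(Y)$'' you need the union of the planes $\spa{A'}$ over defining tuples to be dense in $\sigma_k(Y)$, which in particular forces $\sigma_k(\Gamma_k)=\sigma_k(Y)$. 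This is delicate precisely because $Y=X\cap\Pi_k$ may have components other than $\Gamma_k$ along which $\Pi_k$ is tangent — the situation of Proposition~\ref{vipiace} and Example~\ref{aumb} — so one must prove that $\sigma_k(\Gamma_k)$ already fills $\Pi_k$ (for instance, that the tangent spaces to $\Gamma_k$ at the points of $A$ span $\Pi_k$, which is part of the structure in \cite[Proposition 3.9]{cc3} but is not what Lemma~\ref{gamma} states). Until these two points are supplied, your step 3 — the only step not already contained in the paper's one-line derivation, and the one needed to make the literal statement precise — remains open.
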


When the general contact locus of $X$ is of type I, we can give a more explicit statement.

\begin{proposition} Assume that $\Gamma_k$ is irreducible (i.e. type I).
Assume the existence of a component $W$ of ${\Pi_k}\cap X$, 
different from $\Gamma_k$, such that ${\Pi_k}$ is tangent to $X$ along $W$. Then for all $a,b$ with $a>0$ and $a+b=k$,
the join of the $a$-th secant variety of $W$ with the $b$-th secant variety of $\Gamma_k$ cannot cover ${\Pi_k}$.
\end{proposition}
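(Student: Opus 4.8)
The plan is to argue by contradiction, exploiting the fact that if the join of the $a$-th secant variety of $W$ with the $b$-th secant variety of $\Gamma_k$ filled the span $\Pi_k$, then we could produce, for a general point of $\Pi_k$, a decomposition that competes with the decompositions forced to lie in $\Gamma_k$ by Lemma \ref{gamma}. The geometric content I want to extract is that $W$, being a component of $\Pi_k\cap X$ along which $\Pi_k$ is tangent, contributes points of $X$ whose tangent spaces are contained in $\Pi_k$; hence any finite set of points of $W$ behaves, as far as spanning $\Pi_k$ is concerned, like points whose tangent spaces are already swallowed by $\Pi_k$. This is exactly the tangency hypothesis, and it is what lets me apply a Terracini-type argument inside the subvariety $\Pi_k\cap X$.

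First I would set $\Gamma:=\Gamma_k$, $\Pi:=\Pi_k$, and consider a general point $Q\in\spa{A}$. By Lemma \ref{gamma}, every subset $B\subset X$ of cardinality $k$ with $Q\in\spa{B}$ is contained in $\Pi$; in particular all minimal $X$-decompositions of such a $Q$ lie in $\Pi\cap X$. Next I would suppose, for contradiction, that the join $J$ of the $a$-th secant variety of $W$ with the $b$-th secant variety of $\Gamma$ equals $\Pi$ (with $a>0$, $a+b=k$). Then a general point $Q'\in\Pi$ admits a decomposition $Q'\in\spa{C\cup D}$ with $C\subset W$, $\sharp(C)=a$, and $D\subset\Gamma$, $\sharp(D)=b$. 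Since $a+b=k$ and $C\cup D\subset\Pi\cap X$ with $\sharp(C\cup D)=k$, this exhibits a second $k$-point decomposition of a general point of $\Pi$, distinct from the ones supported entirely on $\Gamma$ because $C\subset W$ and $W\neq\Gamma$ meet $\Gamma$ in a lower-dimensional locus. The crux is then to show that these two families of decompositions are genuinely different and both minimal, so that a general $Q'$ would have $X$-rank $k$ with non-identifiable decomposition in a way that contradicts either the minimality built into $\Gamma_k$ or a dimension count coming from the non-defectivity hypothesis $\dim\sigma_{k+n-1}(X)=(k+n-1)(n+1)-1<r$.

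The main obstacle, and the step I expect to require real care, is ruling out degenerations in which the points of $C$ collide with $\Gamma$ or with each other so that the supposed ``new'' decomposition collapses back onto $\Gamma$ and fails to be genuinely of the stated type. To handle this I would use the tangency of $\Pi$ along $W$ together with Terracini's lemma applied inside $\Pi\cap X$: the tangent spaces to $X$ at general points of $W$ lie in $\Pi$, so the join $J$ has tangent space at a general point spanned by $T_cW$ for $c\in C$ and $T_d\Gamma$ for $d\in D$, all contained in $\Pi$. Comparing $\dim J$ computed this way against $\dim\Pi=\dim\spa{\Gamma_k}$, and using that $\Gamma_k$ already spans $\Pi$ (by definition of $\Pi_k$) while $W$ is a \emph{distinct} tangency component, forces the $a$ directions coming from $W$ to be redundant modulo $\spa{\Gamma}$. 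This redundancy is precisely what prevents $J$ from covering $\Pi$, giving the contradiction; the irreducibility of $\Gamma_k$ (type I) is what guarantees that $\spa{D}$ for general $D\subset\Gamma$ already fills a generic hyperplane-section's worth of $\Pi$, so the extra points from $W$ cannot add the missing dimension.
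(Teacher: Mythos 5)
Your strategy---deriving a contradiction from a Terracini-type count carried out inside the single linear space $\Pi_k$---cannot prove this proposition, and the gap sits exactly at the step you yourself flag as the crux. First, there is no conflict with Lemma \ref{gamma}: that lemma only confines the decompositions of a general $Q\in\spa{A}$ to the \emph{span} $\Pi_k$ of $\Gamma_k$, and a set $C\cup D$ with $C\subset W$, $D\subset\Gamma_k$ lies in $\Pi_k$ (indeed $W\subset \Pi_k\cap X$ by hypothesis), so such decompositions are perfectly compatible with it. Second, the proposed mechanism is vacuous: the join $J$ of $\sigma_a(W)$ and $\sigma_b(\Gamma_k)$ is a subvariety of $\Pi_k$, so the only question is whether $\dim J=\dim\Pi_k$; since $\Pi_k$ is tangent to $X$ along $W$, \emph{every} tangent space in sight is contained in $\Pi_k=\spa{\Gamma_k}$, hence ``redundancy modulo $\spa{\Gamma_k}$'' holds for any subvariety of $\Pi_k$ and can never bound $\dim J$ below $\dim\Pi_k$. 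Likewise, the claim that $\spa{D}$ for $b\le k-1$ points of $\Gamma_k$ ``fills a hyperplane-section's worth of $\Pi_k$'' is false: $\dim\spa{D}\le k-2$, while $\dim\Pi_k=kt_k+k-1$ (cf. the proof of Lemma \ref{a2}). More structurally, no local count can work: in the only nontrivial case $\dim W\ge t_k$ the expected dimension of $J$ is $a(\dim W+1)+b(t_k+1)-1\ge \dim \Pi_k$, so there is no numerical obstruction inside $\Pi_k$; and Example \ref{aumb} shows that in the parallel type II situation a join involving the extra tangency component really does fill the span and raises the secant degree, so the hypothesis ``type I'' must enter through something other than the tangency data you use, which is identical in both situations.

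The missing idea is global: the paper lets $A=\{P_1,\dots,P_k\}$ vary over $X$. Assume the join covers $\Pi_k(A)$ for general $A$. Since $\sigma_k(X)$ has the expected dimension $k(n+1)-1$ and equals the union of the spaces $\Pi_k(A)$, the components $W=W(A)$ cannot all lie in a fixed proper subvariety $Y\subsetneq X$: otherwise $\sigma_k(X)$ would be contained in the join of $\sigma_a(Y)$ with $\sigma_b(X)$, whose dimension is at most $a\dim Y+bn+k-1<k(n+1)-1$ because $\dim Y<n$ and $a>0$ (this is where the covering hypothesis and $a>0$ are used). Hence a general point $Q$ of $W$ is a general point of $X$, so $\{Q,P_2,\dots,P_k\}$ behaves like a general $k$-tuple: by the type I hypothesis its tangential contact locus is an \emph{irreducible} variety of dimension $t_k$ containing $P_2,\dots,P_k$ and $Q$, along which $\Pi_k$ is tangent to $X$. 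Since $Q\notin\Gamma_k$ while the contact component through the general points $P_2,\dots,P_k$ is $\Gamma_k$ itself, this contradicts the generality of $P_1,\dots,P_k$. This monodromy/variation argument over all of $\sigma_k(X)$, which uses both the expected-dimension hypothesis and irreducibility in an essential way, has no substitute in your local approach.
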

\begin{proof} Assume on the contrary that the join covers ${\Pi_k}$. Since $\sigma_k(X)$ has the expected dimension and it is
equal to the union of the spaces ${\Pi_k}$'s, when the $k$-th points $P_1,\dots, P_k$ vary on $X$, then for dimensional reasons
there cannot be a subvariety $Y\subsetneq X$ which contains all the respective components $W$'s. Thus, for $P_1,\dots, P_k$ general,
 a general point $Q$ of $W$ is a general point of $X$. Hence the contact variety of $Q, P_2,\dots, P_k$ has the same properties
 of $\Gamma_k$. In particular ${\Pi_k}$ is tangent to $X$ along an irreducible  variety of the same dimension of $\Gamma_k$, which contains
 $P_2,\dots, P_k$ and $Q$. Since $Q\notin \Gamma_k$, this is excluded by the generality of $P_1,\dots,P_k$.
 \end{proof}

\begin{corollary} Assume that $\Gamma_k$ is irreducible (i.e. type I). Then the secant degree of $X$ is equal to the secant degree of 
the general contact locus $\Gamma_k$.
\end{corollary}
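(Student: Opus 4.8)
The plan is to prove the sharper statement that \emph{every} minimal $X$-decomposition of a general point $Q\in\spa{A}$ actually lies on $\Gamma_k$. Once this is established the two secant degrees coincide: a general point of $\sigma_k(\Gamma_k)=\Pi_k$ is such a $Q$, and the $k$-subsets of $\Gamma_k$ spanning $Q$ are then exactly the elements of $\Ss_X(Q)$, so the counts agree.

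First I would record, exactly as in the proof of Proposition \ref{propa1}, that since $\dim\sigma_k(X)=k(n+1)-1$ a general $Q\in\sigma_k(X)$ has the property that every $B\in\Ss_X(Q)$ is a general element of the $k$-th symmetric product of $X$. Hence each such $B$ is a general $k$-subset, so by \cite[Proposition 3.9]{cc3} its tangential contact locus $\Gamma_k(B)$ is irreducible (type I), contains $B$, and has $\spa{\Gamma_k(B)}$ tangent to $X$ along $\Gamma_k(B)$; moreover, by Terracini's Lemma, $M_B=T_Q\sigma_k(X)=M_A$ for every such $B$. Thus all the loci $\Gamma_k(B)$ are irreducible components of the tangency locus of the \emph{single} space $M_A$, and in particular each decomposition $B$ is contained in one irreducible component of that locus.

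Next I would take a general $Q\in\spa{A}$, which by the generality of $A$ is also general in $\sigma_k(X)$ and in $\Pi_k$, and consider any $B\in\Ss_X(Q)$ having a point off $\Gamma_k$. Since $B$ lies in the single component $W:=\Gamma_k(B)$ and $B\not\subseteq\Gamma_k$, we get $W\neq\Gamma_k$. By Lemma \ref{gamma} we have $B\subseteq\Pi_k$, and because $B$ is a general $k$-subset of the irreducible $W$ this forces $W\subseteq\Pi_k$ (otherwise $k$ general points of $W$ would all lie in the proper closed subset $W\cap\Pi_k$). Consequently $\spa{W}\subseteq\Pi_k$; but $Q\in\spa{B}\subseteq\sigma_k(W)\subseteq\spa{W}$ with $Q$ general in $\Pi_k$, so $\sigma_k(W)=\spa{W}=\Pi_k$. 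Thus $W$ is a component of $\Pi_k\cap X$, different from $\Gamma_k$, along which $\Pi_k$ is tangent to $X$, with $\sigma_k(W)=\Pi_k$. Applying the previous Proposition with $a=k$ (so $b=0$, the join reducing to $\sigma_k(W)$) yields that $\sigma_k(W)$ cannot cover $\Pi_k$, a contradiction. Therefore no $B\in\Ss_X(Q)$ meets $X\setminus\Gamma_k$, i.e. every decomposition of a general $Q$ lies in $\Gamma_k$.

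Finally, combining this with Corollary \ref{corollary:sec:deg}, the $X$-decompositions of a general $Q$ are precisely the cardinality-$k$ subsets of $\Gamma_k$ spanning $Q$; since $\sigma_k(\Gamma_k)=\Pi_k$, a general point of $\sigma_k(\Gamma_k)$ is such a $Q$, whence the secant degree of $X$ equals the secant degree of $\Gamma_k$. The main obstacle is the middle step isolating a decomposition on a single component: one must first use the irreducibility of the general contact locus to force each $B$ into one irreducible component $W$ of the tangency locus of $M_A$, and then upgrade the a priori weaker tangency ``$M_A$ tangent along $W$'' to the genuine ``$\Pi_k$ tangent along $W$ with $\sigma_k(W)=\Pi_k$'', which is exactly the hypothesis needed to invoke the Proposition and reach the contradiction.
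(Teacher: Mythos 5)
Your proof follows the route the paper intends: the Corollary appears there with no proof at all, as an immediate consequence of the preceding Proposition on joins together with Lemma \ref{gamma} and Corollary \ref{corollary:sec:deg}, and your argument is a fleshed-out version of exactly that chain. The one genuine difference is how the Proposition gets used. Since it is stated for every splitting $a+b=k$ with $a>0$, the intended argument presumably lets a putative decomposition $B$ of a general $Q$ distribute $a$ points on an extra tangency component $W$ and $b$ points on $\Gamma_k$, and kills each case by the non-covering of the corresponding join. You instead rule out mixed decompositions once and for all: by the dimension count of Proposition \ref{propa1}, every $B\in\Ss_X(Q)$ is a general $k$-subset of $X$, so by \cite[Proposition 3.9]{cc3} its own contact locus $\Gamma_k(B)$ is irreducible (type I) and contains all of $B$, and Terracini identifies $\Gamma_k(B)$ with a single component of the tangency locus of $M_A$. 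Hence only the extreme case $(a,b)=(k,0)$ of the Proposition is ever needed. This is a real simplification; its cost is the extra generality arguments needed to see that $B$ sits generally on $W=\Gamma_k(B)$, forcing $W\subseteq\Pi_k$.

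One step is asserted rather than proved, and it is the one place where your write-up is weaker than it should be: ``$Q$ is also general in $\Pi_k$.'' For fixed $A$, the point $Q$ is confined to $\spa{A}$, which is a proper linear subspace of $\Pi_k$ whenever $\dim\Pi_k>k-1$, so this does not follow from ``the generality of $A$'' alone. What makes it true is the density statement behind \cite[Proposition 3.9(iii)]{cc3}: the $k$-subsets $A'\subset\Gamma_k$ whose contact locus is again $\Gamma_k$ are dense in the $k$-th symmetric product of $\Gamma_k$, so their spans sweep out $\sigma_k(\Gamma_k)=\Pi_k$; this is also what the paper itself relies on when, in the proof of the Proposition, it asserts that $\sigma_k(X)$ is the union of the spaces $\Pi_k$. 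Both your contradiction ($\sigma_k(W)\supseteq\Pi_k$ against the $(k,0)$ case of the Proposition) and your closing sentence (a general point of $\sigma_k(\Gamma_k)$ is such a $Q$) need exactly this fact, so you should cite it explicitly; with it in place the argument is sound at the same level of rigor as the paper's own.
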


The situation becomes completely different when the general $\Gamma_k$ is of type II. 

\begin{remark}\label{remspaces} Assume that $\dim \sigma _{k+n}(X) =(k+n)(n+1)-1< r$,
so that $X$ is also weakly $(k+1)$-defective. Assume that a general {tangential} contact locus $\Gamma_{k+1}$ is formed by $k+1$
linearly independent linear spaces. Then also the general {tangential} contact locus $\Gamma_k$ is formed by $k$
linearly independent linear spaces.

Indeed, with no loss of generality, as in the proof of Lemma \ref{a2}, we may assume $\Gamma_k\subset\Gamma_{k+1}$.
Thus $\Gamma_k$ is of type II, and all its  components are contained in one component of $\Gamma_{k+1}$.
But clearly if one takes irreducible varieties $\Gamma^i$, $i=1,\dots,k$, of
the same dimension $t_k$, such that each $\Gamma^i$ spans a linear space ${\Pi^i}$ of dimension $>t_k$, and the linear
spaces ${\Pi^i}$'s are linearly independent, then the span of the union of the $\Gamma^i$'s cannot have dimension $kd+k-1$.
\end{remark} 

We point out the following example.

\begin{example} (\cite[Proposition 1.7]{bv}) Let $X$ be the Grassmanniann $G:=\mathbb{G}(\mathbb{P}^2, \mathbb{P}^7)$
and let $k=3$. Then $X$ is weakly $3$-defective and, for a general choice of $P_1,P_2,P_3\in X$, $\Gamma_3$ is the union 
of three disjoint linear spaces $L_1,L_2,L_3$ of dimension $3$, such that $L_i$ contains $P_i$. The span ${\Pi_3}$ of $\Gamma_3$
is also tangent to $X$ along a linear space $W=\mathbb{P}^5$, which misses the $P_i$'s and intersects each $L_i$ along a line.

One computes soon that the join of $L_i,L_j,W$ cannot fill ${\Pi_3}$, for obvious dimensional reasons.
It follows that the secant degree of $X$ equals the secant degree of ${\Gamma_3}$, which is $1$. Hence  the generic 
$3$-identifiability of $G$ holds. 
\end{example}

We can derive, as in the previous example a general statement for the case where the tangential contact variety 
$\Gamma_k$ is of type II.

\begin{proposition}\label{vipiace} Assume that the general $k$-tangential contact locus $\Gamma_k$ is of type $II$, i.e.
it is the union of  components $\Gamma^1, \ldots , \Gamma^k$. Assume that ${\Pi_k}$ is  tangent to $X$
along $\Gamma_k\cup\Gamma$, where $\Gamma\not\subset\Gamma_k$. If  $\langle \Gamma^1\cup \cdots  \cup \hat 
\Gamma^I\cup \cdots \cup \Gamma^k \cup \Gamma \rangle\neq {\Pi_k} $, where with {$\hat \Gamma^I$} we indicate any non empty subset of 
$\{\Gamma^1, \ldots , \Gamma^k\}$, then the $k$-th secant degree of $X$ is equal to the $k$-th secant degree of $\Gamma_k$.
\end{proposition}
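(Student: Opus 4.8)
The plan is to combine Corollary \ref{corollary:sec:deg} with a generality argument that forbids any minimal decomposition from using the extra tangency component $\Gamma$. First I would apply Corollary \ref{corollary:sec:deg} to replace $X$ by $Y:=X\cap \Pi_k$, so that the $k$-th secant degree of $X$ equals that of $Y$. Since $A\subset \Gamma_k\subseteq \Pi_k$, a general $Q\in \langle A\rangle$ is simultaneously a general point of $\sigma_k(X)$ (by the generality of $A$, exactly as in the proof of Lemma \ref{gamma}) and of $\sigma_k(\Gamma_k)$. It therefore suffices to show that every minimal $X$-decomposition $B$ of such a $Q$ is contained in $\Gamma_k$: granting this, $\Ss_X(Q)$ and $\Ss_{\Gamma_k}(Q)$ coincide as sets, and they have the same cardinality $k$ because $\Gamma_k\subseteq X$ forces $r_{\Gamma_k}(Q)=r_X(Q)=k$. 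Hence the two secant degrees agree.

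Next I would locate the points of a minimal decomposition. By Lemma \ref{gamma} we have $B\subset \Pi_k$. By Terracini's lemma, $M_B=M_A$ is the tangent space to $\sigma_k(X)$ at $Q$, so every $b\in B$ satisfies $T_bX\subseteq M_A$; being also a point of $\Pi_k$, such a $b$ lies in the locus where $\Pi_k$ is tangent to $X$, which by hypothesis is exactly $\Gamma_k\cup \Gamma=\Gamma^1\cup \cdots \cup \Gamma^k\cup \Gamma$. Thus $B\subset \Gamma^1\cup \cdots \cup \Gamma^k\cup \Gamma$.

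Now comes the span count. Suppose, for a contradiction, that $B\not\subset \Gamma_k$; then, since $B\subset \Gamma_k\cup\Gamma$, some point of $B$ lies in $\Gamma\setminus \Gamma_k$. Consequently at most $k-1$ points of $B$ lie in $\Gamma_k$, and these meet at most $k-1$ of the $k$ components $\Gamma^1,\dots,\Gamma^k$; hence there is a nonempty subset $I\subseteq\{1,\dots,k\}$ of components met by no point of $B$. Every point of $B$ then lies in $(\bigcup_{j\notin I}\Gamma^j)\cup\Gamma$, so $Q\in \langle B\rangle\subseteq \langle \Gamma^1\cup \cdots \cup \hat\Gamma^I\cup \cdots \cup \Gamma^k\cup \Gamma\rangle$. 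By hypothesis this span is a proper linear subspace of $\Pi_k$, which contradicts the generality of $Q$ in $\Pi_k$. Therefore $B\subset \Gamma_k$, and the reduction above finishes the proof.

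The delicate step is the second one: a priori Lemma \ref{gamma} only places the points of a minimal decomposition in $\Pi_k$, and one must upgrade this to membership in the tangency locus $\Gamma_k\cup\Gamma$ rather than in some other part of $Y=X\cap\Pi_k$. This is precisely where the hypothesis ``$\Pi_k$ is tangent to $X$ along $\Gamma_k\cup\Gamma$'' is used, through Terracini's lemma together with part (iii) of \cite[Proposition 3.9]{cc3}; once it is secured, the remaining content is the reduction via Corollary \ref{corollary:sec:deg} and the purely linear-algebraic span argument, both of which are routine.
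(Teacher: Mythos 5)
Your proposal is, in structure, exactly the argument the paper intends: Proposition \ref{vipiace} carries no written proof in the paper (it is introduced only by ``we can derive, as in the previous example''), and the implicit argument is precisely your three steps --- Lemma \ref{gamma}/Corollary \ref{corollary:sec:deg} to confine every decomposition $B$ of a general $Q\in\spa{A}$ to $\Pi_k$, a Terracini argument to push $B$ into the contact locus, and the span hypothesis plus generality of $Q$ to exclude decompositions meeting $\Gamma\setminus\Gamma_k$. Your third step is sound: a decomposition meeting $\Gamma\setminus\Gamma_k$ has at most $k-1$ points on $\Gamma_k$, hence misses a nonempty set of components and forces $Q$ into a proper subspace $\spa{\Gamma^1\cup\cdots\cup\hat\Gamma^I\cup\cdots\cup\Gamma^k\cup\Gamma}$ of $\Pi_k$, which a general $Q$ avoids --- granting the generality facts the paper itself takes for granted (that the points of $A$ are general in their components, so that $Q$ is general in $\sigma_k(\Gamma_k)=\Pi_k$).

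The one place where your wording does not literally work is the second step, and it is worth being precise because it is the crux. Terracini gives you $M_B\subseteq T_Q\sigma_k(X)=M_A$ (only an inclusion: equality would require $Q$ to be general in $\spa{B}$, which you do not know, but the inclusion suffices), hence $T_bX\subseteq M_A$ for every $b\in B$: so $B$ lies in the tangency locus of $M_A$, not of $\Pi_k$. From $T_bX\subseteq M_A$ and $b\in\Pi_k$ you cannot deduce $T_bX\subseteq\Pi_k$, because $\Pi_k\subsetneq M_A$ always in this setting: $\dim M_A=k(n+1)-1$ by non-defectivity, while $\dim\Pi_k=kt_k+k-1$ with $t_k\le n-1$. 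For the same reason the hypothesis ``$\Pi_k$ is tangent to $X$ along $\Gamma_k\cup\Gamma$'' cannot be meant literally: literal tangency at the points of $A\subset\Gamma_k$ would give $M_A\subseteq\Pi_k$, a contradiction (and in the paper's Grassmannian example no $15$-dimensional tangent space of $X$ can fit inside the $11$-dimensional $\Pi_3$). The hypothesis is therefore forced to mean that the contact locus of $M_A$ (equivalently, of a general hyperplane containing $M_A$) meets $\Pi_k$ exactly in $\Gamma_k\cup\Gamma$. With that reading your step two is exactly right --- $B$ lies in the $M_A$-contact locus and in $\Pi_k$, hence in $\Gamma_k\cup\Gamma$ --- and the rest of your argument closes the proof. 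In short: correct approach and correct mechanism, but replace ``the locus where $\Pi_k$ is tangent to $X$'' by ``the locus where $M_A$ is tangent to $X$, intersected with $\Pi_k$'', both in your proof and in your reading of the hypothesis; as stated, that inference runs in the wrong direction.
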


\begin{corollary}\label{cipiace} In the assumptions of the previous proposition, assume the $\Gamma^i$'s are linear
subspaces of $\PP^r$, and linearly independent. Then $X$ is $j$-identifiable for any $j\leq k$.
\end{corollary}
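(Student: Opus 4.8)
The plan is to feed the conclusion of Proposition~\ref{vipiace} into a direct computation of the secant degree of a union of linearly independent linear spaces, and then to propagate identifiability downward by means of Proposition~\ref{propa1}. First I would invoke Proposition~\ref{vipiace}, all of whose hypotheses are in force, to identify the $k$-th secant degree of $X$ with the $k$-th secant degree of $\Gamma_k=\Gamma^1\cup\cdots\cup\Gamma^k$. It then suffices to show that this latter degree equals $1$ under the extra assumption that the $\Gamma^i$ are linearly independent linear subspaces. Since the $\Gamma^i$ are linearly independent, their span ${\Pi_k}=\spa{\Gamma^1\cup\cdots\cup\Gamma^k}$ is, in the affine cone, the direct sum of the underlying vector spaces; hence a general $Q\in{\Pi_k}$ has a unique expression $Q=q_1+\cdots+q_k$ with $q_i\in\Gamma^i$, and genericity of $Q$ forces every $q_i\neq 0$. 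The set $\{q_1,\dots,q_k\}\subset\Gamma_k$ is then a decomposition of $Q$ by $k$ points, so $\sigma_k(\Gamma_k)={\Pi_k}$ and the $k$-th secant degree of $\Gamma_k$ is well defined.

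The key step is to check that this decomposition is the only one. Suppose $Q\in\spa{b_1,\dots,b_k}$ with $b_j\in\Gamma_k$. Linear independence of the $\Gamma^i$ makes them pairwise disjoint (by Grassmann), so each $b_j$ lies in a unique component $\Gamma^{i(j)}$. Collecting the contributions coming from each fixed component and using that every $\Gamma^i$ is a linear space, the points falling into $\Gamma^i$ sum to an element $p_i\in\Gamma^i$; uniqueness of the direct-sum decomposition forces $p_i=q_i$. As each $q_i\neq 0$, every component must receive at least one $b_j$, and since there are exactly $k$ points and $k$ components, a pigeonhole count shows each component receives exactly one. Thus $\{b_1,\dots,b_k\}=\{q_1,\dots,q_k\}$, the decomposition is unique, the $k$-th secant degree of $\Gamma_k$ equals $1$, and therefore $\alpha_k=1$, i.e. $X$ is $k$-identifiable.

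Finally I would propagate identifiability to every $j\leq k$. By the standing hypothesis $\dim\sigma_{k+n-1}(X)=(k+n-1)(n+1)-1<r$, the variety $X$ is not $(k+n-1)$-defective, hence not $m$-defective for any $m\leq k+n-1$; in particular $\dim\sigma_{m+1}(X)=(m+1)(n+1)-1$ for all $m\leq k-1$. The contrapositive of Proposition~\ref{propa1} then says: if $\dim\sigma_{m+1}(X)$ takes its expected value and $\alpha_{m+1}=1$, then $\alpha_m=1$. Starting from $\alpha_k=1$ and applying this successively for $m=k-1,k-2,\dots,1$ yields $\alpha_j=1$ for every $j\leq k$, which is exactly the $j$-identifiability of $X$.

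The step I expect to be the main obstacle is the uniqueness argument of the second paragraph, namely ruling out ``unbalanced'' decompositions of a general $Q\in{\Pi_k}$ that would place two or more points in one component and none in another. What closes this gap is the interplay of two facts: linear independence of the $\Gamma^i$ forces their disjointness and the uniqueness of the direct-sum decomposition, while genericity of $Q$ forces all coordinates $q_i$ to be nonzero, so that the pigeonhole count distributing $k$ points among $k$ components is indeed forced to be one-to-one.
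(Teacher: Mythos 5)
Your proof is correct and takes essentially the same route as the paper: invoke Proposition~\ref{vipiace} to equate the $k$-th secant degree of $X$ with that of $\Gamma_k$, observe that a union of linearly independent linear spaces is identifiable, and reduce the case $j\leq k$ to the case $j=k$. The paper merely asserts the last two steps (``clearly'' and ``this last follows since\dots''), whereas you supply the direct-sum uniqueness argument in the affine cone and justify the downward propagation via Proposition~\ref{propa1}; both of these fillings are sound under the standing non-defectivity hypothesis.
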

\begin{proof} Clearly if $X$ is $k$-identifiable then it is also $j$-identifiable for every $j\leq k$, 
hence we need simply to show that $X$ is $k$-identifiable. This last follows since the union of linearly independent
linear spaces is identifiable. 
\end{proof}

On the other hand, there are cases (where $X$ is an {\it almost unbalanced} products of projective spaces) in which $\Gamma_k$
has type II and the previous proposition does not apply, moreover the $k$-th secant degree of $\Gamma_k$ is different from
the $k$-th secant degree of $X$.

A first example is the following.

\begin{example}\label{aumb} Consider the Segre variety $X$ of $\mathbb{P}^2\times \mathbb{P}^2\times \mathbb{P}^5$ and let $k=5$.
One computes that the general tangential contact locus  $\Gamma_5$ consist of $5$ linearly independent linear subspaces 
$\Gamma^1\cup\dots\cup\Gamma^5$, where each $\Gamma_i$ is a $\PP^4$ which passes through one of the points $P_i$.

The span ${\Pi_5}$ of $\Gamma_5$ is a $\mathbb{P}^{24}$ which is tangent to $X$ along an extra $\Gamma=\mathbb{P}^4$.
$\Gamma$ is linearly independent from any subset of $4$ among the $\Gamma^i$'s. Thus the $5$-th secant degree of $\Gamma_5$ is $1$,
but the secant degree of $X$ equals the secant degree of $\Gamma_5\cup\Gamma$, hence it is $6$.
\end{example}

A similar situation holds for the almost unbalanced cases defined in Section 8 of \cite{bco}.

\section{Applications to concrete examples}

We show below how Lemma \ref{a2} applies, thanks to the observations contained in Section \ref{secdeg}, to conclude
the identifiability of some
relevant cases of Segre products, Veronese varieties, Segre-Veronese varieties and moment varieties.
\medskip}

Non-trivial Segre products $X\subset\PP^r$ , {$X$ image of $\PP^{a_1}\times\cdots\times\PP^{a_s},$
 contain, for each $j=1,\dots,s$, linear spaces ${L_1,\dots, L_k}$, of dimension $a_j$ passing through $k$ general points, 
which are linearly independent, provided that $k(a_j+1)$ is smaller than $(r+1)$}. We do not know examples 
where these spaces are contact loci of weakly defective Segre products.
In any case, the previous lemma, together with the analysis of weakly defective varieties contained in \cite{cc2} and \cite{co},
yield the following.

\begin{theorem}\label{ip1}
Let $X\subset \PP^r$ be an integral and non-degenerate $n$-dimensional variety and let $k$ be a positive integer. 
Assume that $X$ is not uniruled by lines and that $\dim \sigma _{k+n-1}(X) ={(k+n-1)}(n+1)-1< r$ (in particular
 $X$ is not $(k+n-1)$-defective).
Then $X$ is not weakly $j$-defective (hence it is $j$-identifiable) for every $j\leq k$.
\end{theorem}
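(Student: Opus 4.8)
The plan is to argue by contradiction and to run the growth dichotomy of Lemma \ref{a2} along the whole chain of contact loci from $\Gamma_k$ up to $\Gamma_{k+n-1}$. Suppose that $X$ were weakly $j$-defective for some $j\le k$. Since $\dim\sigma_{k+n-1}(X)=(k+n-1)(n+1)-1<r$, the variety $X$ is not $i$-defective for any $i\le k+n-1$, so every $\sigma_i(X)$ with $i\le k+n-1$ is a proper subvariety of $\PP^r$ of the expected dimension. Hence Remark \ref{rem:catena:weak} applies repeatedly and propagates weak defectivity upward: from weakly $j$-defective we obtain weakly $m$-defective for every $m$ with $j\le m\le k+n-1$. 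In particular it suffices to derive a contradiction from the assumption that $X$ is weakly $k$-defective, so that $t_k\ge 1$.

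Next I would bound the dimensions of the contact loci. For $|A|=k+n-1$ the linear space $M_A$ has dimension $(k+n-1)(n+1)-1<r$, so it cannot contain $T_xX$ for every $x\in X$ (these tangent spaces span the non-degenerate $X$, hence span $\PP^r$); therefore $\Gamma_{k+n-1}$ is a proper subvariety of the irreducible $n$-fold $X$, and likewise each $\Gamma_m$ with $m\le k+n-1$. Consequently $t_m=\dim\Gamma_m\le n-1$ for all these $m$. Now apply Lemma \ref{a2} to each of the $n-1$ steps $m\to m+1$, for $m=k,\dots,k+n-2$ (the hypothesis $\dim\sigma_{m+1}(X)=(m+1)(n+1)-1<r$ holds throughout, and weak $m$-defectivity holds by the previous paragraph). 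At each step we fall into one of two alternatives. If the first alternative $t_m<t_{m+1}$ occurred at every step, then starting from $t_k\ge 1$ we would get
$$t_{k+n-1}\ \ge\ t_k+(n-1)\ \ge\ n,$$
contradicting $t_{k+n-1}\le n-1$.

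Therefore the second alternative of Lemma \ref{a2} must occur at some step $m\to m+1$: both $\Gamma_m$ and $\Gamma_{m+1}$ are of type II and each irreducible component is a linear space. In particular $\Gamma_m$ is the union of $m$ linearly independent linear subspaces of dimension $t_m\ge 1$, one through each point of the general set defining it. As that set varies, these components sweep out $X$ by positive-dimensional linear spaces, so a general point of $X$ lies on a line contained in $X$; that is, $X$ is uniruled by lines, against the hypothesis. This contradiction shows that $X$ cannot be weakly $j$-defective for any $j\le k$. Finally, for $j\le k$ we have $r>(k+n-1)(n+1)-1\ge j(n+1)-1$, so by \cite[Theorem 2.5]{cc2} the non weak $j$-defectivity yields $j$-identifiability, as claimed.

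I expect the delicate point to be the type II alternative: ruling it out is exactly what forces the use of the hypothesis that $X$ is not uniruled by lines, and it is the linearity and the linear independence of the components supplied by Lemma \ref{a2} that make the uniruledness conclusion rigorous rather than merely heuristic.
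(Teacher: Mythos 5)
Your proof is correct and follows essentially the same route as the paper's: assume weak $j$-defectivity, propagate it upward via Remark \ref{rem:catena:weak}, apply the dichotomy of Lemma \ref{a2} along the chain of contact loci, use the hypothesis that $X$ is not uniruled by lines to exclude the type-II alternative with linear components, and derive the contradiction $\dim\Gamma \geq n$. The only differences are cosmetic: you reduce to $j=k$ before running the chain and you spell out details the paper leaves implicit (the bound $t_m \le n-1$ via non-degeneracy, and the sweeping argument showing that linear contact loci through general points force uniruledness).
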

\begin{proof} If $X$ is not weakly $j$-defective, then the claim follows by \cite{cc2}. Assume that $X$ is weakly $j$-defective. 
Hence $X$ is also weakly $(j+i)$-defective for $i=1,\dots,n-1$. Since $X$ is not uniruled, then $\Gamma_{j+i}$ 
cannot be a union of linear spaces of  positive dimension, so by Lemma \ref{a2} we have:
$$1\leq \dim{\Gamma_j}<\dots <\dim(\Gamma_{j+n-1}).$$
It follows that $\dim(\Gamma_{j+n-1})\geq n$, a contradiction.
\end{proof}

Theorem \ref{ip1} is applicable to any embedding $X\subset \PP^r$ with $\Oo _X(1) 
\cong \Ll ^{\otimes 2}\otimes \Mm$ with $\Ll$ an ample line bundle and $\Mm$ a nef line bundle.
In particular it is applicable to any Veronese embedding of $\PP^n$, since in this case $\dim \sigma _i(X)$ 
is known for all $i$ by a famous theorem of Alexander and Hirschowitz (\cite{ah}). 
However, the result that we get from Theorem \ref{ip1} for symmetric tensors  is weaker than
the results of  \cite{gm, cov1}, where all the non-identifiable Veronese varieties are classified.
\medskip

Let us turn to the study of Segre varieties.
Fix integer $s\ge 2$, $n_i>0$, $1\leq i\le s$. Set $n:= n_1+\cdots +n_s$ and $r:=-1+\prod _{i=1}^{s}(n_i+1)$ 
and let $X\subset \PP^r$  be the  ($n$-dimensional) Segre embedding of the multiprojective space 
$\PP^{n_1}\times \cdots \times \PP^{n_s}$.  Since $X$ is uniruled by lines,  we cannot apply Theorem \ref{ip1} 
directly. On the other hand, the structure of linear spaces
contained in Segre embeddings of multiprojective spaces is well known.

\begin{remark}\label{b1} Let $\pi _i: X\to \PP^{n_i}$ denote the projection of the Segre variety
$X$ into its $i$-th factor. For any $i=1, \ldots ,s$ let $\Oo _X(\epsilon _i)$ be the line bundle $\pi _i^\ast (\Oo _{\PP^{n_i}}(1))$.
Each $\Oo _X(\epsilon _i)$ is a spanned line bundle and $\Oo _X(1) =\otimes _{i=1}^{s} \Oo _X(1)$. 
Let $L\subset X$ be any linear space of positive dimension. Since $\Oo _L(1)\cong
\oplus _{i=1}^{s} \Oo _X(\epsilon _i)_{|L}$, there is $j\in \{1,\dots ,s\}$ such that $\Oo _X(\epsilon _i)_{|L} 
\cong \Oo _L$ for all $i\ne j$, i.e. $L$ is contained in a fiber
of $\pi _j$. Fix an integer $k\ge 1$ such that $\dim \sigma _{k+1}(X) = (k+1)(n+1)-1 <r$ and assume that $X$ is weakly $(k+1)$-defective,
with general tangential $(k+1)$-contact locus $\Gamma _{k+1}$ equal to a union 
of linear spaces, say ${L_1,\dots ,L_k,L_{k+1}}$, of dimension $\epsilon _k >0$.
 Thus for each $i=1,\dots ,k+1$ there is a unique integer 
$j(i)\in \{1,\dots ,s\}$ such that $\pi _{j(i)}({L_i})$ is a point.
We get a function $\phi _{k+1}: \{1,\dots ,k+1\} \to \{1,\dots ,s\}$ defined by the formula $\phi _{k+1}(i): =j(i)$.
The function $\phi _{k+1}$ cannot depend on the choice of $k+1$ general points. Then, 
moving independently the points, by monodromy we get that this function $\phi _{k+1}$ must be invariant for the action
of the full symmetric group  on $\{1,\dots ,k+1\}$. 
Thus $j(i) =j(1)$ for all $i$.
\end{remark}

Using the previous remark, we can prove the following.

\begin{theorem}\label{ip3} Let $X$ be the Segre embedding of 
$\mathbb{P}^{n_1}\times \cdots \times \mathbb{P}^{n_k}$ in $\mathbb{P}^r$ as above.
Assume that for each $i\in \{1,\dots ,s\}$ there is  $j\in \{1,\dots ,s\}\setminus \{i\}$ 
such that $n_j=n_i$. Assume also that $\dim \sigma _{k+n-1}(X) ={(k+n-1)}(n+1)-1< r$.
Then $X$ is not weakly $j$-defective (hence it is $j$-identifiable) for every $j\leq k$.
\end{theorem}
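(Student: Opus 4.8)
The plan is to reproduce the dimension-growth argument from the proof of Theorem \ref{ip1} along the chain of tangential contact loci. The only missing ingredient is that a Segre product is uniruled by lines, so one cannot exclude a priori the second alternative of Lemma \ref{a2}, in which a contact locus degenerates to a union of linearly independent linear spaces. I would close this gap by using the doubling hypothesis to show that no general contact locus of such a weakly defective Segre product can be such a union; once this is established, the proof of Theorem \ref{ip1} applies essentially word for word.

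First I would set up the chain. Assume, for a contradiction, that $X$ is weakly $j$-defective for some $j \leq k$. Since $\dim \sigma_{k+n-1}(X) = (k+n-1)(n+1)-1 < r$, all intermediate secant varieties have the expected dimension and, by Remark \ref{rem:catena:weak}, $X$ is weakly $(j+i)$-defective for every $i = 0, 1, \dots, n-1$; write $\Gamma_j, \Gamma_{j+1}, \dots, \Gamma_{j+n-1}$ for the associated general contact loci. Suppose we have shown that none of these is a union of linearly independent linear spaces. Then at each step the second alternative of Lemma \ref{a2}, which requires both consecutive loci to be such unions, is unavailable, so the first alternative holds and the dimension strictly increases at every step. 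Consequently $\dim \Gamma_{j+n-1} \geq \dim \Gamma_j + (n-1) \geq 1 + (n-1) = n$, which is exactly the contradiction obtained in the proof of Theorem \ref{ip1}. Hence $X$ is not weakly $j$-defective, and $j$-identifiability follows from \cite{cc2}.

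The crux is therefore the claim that, for each $m$ with $j \leq m \leq j+n-1$, the general contact locus $\Gamma_m$ is not a union of linearly independent linear spaces; this is the only place where the hypothesis that every $n_i$ is repeated enters. Suppose instead that $\Gamma_m = L_1 \cup \cdots \cup L_m$ with the $L_i$ linearly independent linear spaces of positive dimension. By Remark \ref{b1} there is a single factor index $j_0$ such that each $L_i$ lies in the $j_0$-th factor direction (its projection to every other factor is constant), and the monodromy argument given there shows that $j_0$ is intrinsic to $X$, independent of the general configuration chosen. The doubling hypothesis supplies an index $j_1 \neq j_0$ with $n_{j_1} = n_{j_0}$. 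Interchanging the two isomorphic factors $\PP^{n_{j_0}}$ and $\PP^{n_{j_1}}$ induces a linear automorphism $\tau$ of $\PP^r$ preserving $X$ and compatible with tangent spaces, linear spans and tangency; thus $\tau$ carries general configurations to general configurations with $\Gamma_m(\tau A) = \tau(\Gamma_m(A))$, while sending $j_0$-direction linear spaces to $j_1$-direction linear spaces. The distinguished factor of $\tau(A)$ would then be both $j_1$ (by this transport) and $j_0$ (by intrinsicness), forcing $j_0 = j_1$, a contradiction. This proves the claim, and with it the theorem.

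I expect the main obstacle to be making precise that the distinguished direction $j_0$ of Remark \ref{b1} is a genuine invariant of $X$, constant over the whole irreducible family of general configurations, rather than merely a common direction for the components of one fixed set $A$: it is exactly this configuration-independence that is needed for the factor-swap automorphism to produce the contradiction, and it is what fails for almost unbalanced products such as the $\PP^2 \times \PP^2 \times \PP^5$ of Example \ref{aumb}, where the offending factor is not repeated.
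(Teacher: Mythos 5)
Your proposal is correct and follows essentially the same route as the paper's own proof: the chain of contact loci bounded in dimension by $n-1$ forces the linear-spaces alternative of Lemma \ref{a2} at some step, and this is excluded by combining Remark \ref{b1} (monodromy giving a single distinguished factor, constant over the irreducible family of general configurations) with the factor-swap automorphism supplied by the doubling hypothesis. The only difference is organizational — you rule out the linear-spaces case for every locus first and then run the dimension-growth argument, while the paper first locates an index $h$ where that case must occur and then derives the contradiction — but the ingredients and their use are identical.
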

\begin{proof}
Assume that $X$ is weakly $j$-defective. Thus it is also weakly $(j+i)$-defective, for $i=0,\dots,n-1$.

 Each general contact locus $\Gamma_{j+i}$ has dimension $t_{j+i}\leq n-1$ and so there cannot be a strictly increasing sequence 
 of integer $t_j,\dots,t_{j+n-1}$. By  Lemma \ref{a2} there is an integer $h\in \{j,\dots ,j+n-1\}$  
 such that $\Gamma _h$ is a union ${L_1\cup \cdots \cup L_h}$ of $h$ independent linear spaces
 of dimension $t_h$. Thus, by Remark  \ref{b1}, there is an integer $m\in\{1,\dots,s\}$
 such that, for all $i=1,\dots ,h$, $\pi _m({L_i})$ is a linear space of dimension $\dim {L_i}$, while
$\pi _x({L_i})$ is a point for all $x\in \{1,\dots ,s\}\setminus \{m\}$. 

Notice that the general contact locus $\Gamma _h$ is invariant for linear automorphisms of $\PP^r$ preserving $X$,
 in the sense that for any automorphism $\phi : \PP^r\to \PP^r$ with $\phi (X)=X$ the set $\phi (\Gamma _h)$
may be taken as a general contact locus. 
Thus $\pi _x(h({L_i}))$ is a point for each $x\in \{1,\dots ,s\}\setminus \{m\}$, 
while $\pi _m(h({L_i)})$ is a linear space of dimension $\dim \Pi_i$.
By assumption there is $z\in \{1,\dots ,s\}\setminus \{j\}$ with $n_z =n_j$. 
Hence there is an automorphism $\psi$ of $X$ exchanging the $m$-th factor and the $z$-th factor.
This automorphism extends to a linear automorphism $\phi : \PP^r\to \PP^r$ with $\phi (X)=X$, 
because the embedding $X\hookrightarrow \PP^r$ is induced by the complete linear system
$|\Oo _X(1,\dots ,1)|$ and $\psi ^\ast (\Oo _X(1,\dots ,1)) \cong \Oo _X(1,\dots ,1)$. 
We would have that $\pi _z({L_i})$ is a linear space of dimension $\dim {L_i}$, 
which yields a contradiction.
\end{proof}

The condition that for each $i\in \{1,\dots ,s\}$ there is  $j\in \{1,\dots ,s\}\setminus \{i\}$ 
such that $n_j=n_i$ restricts the range of application of Theorem \ref{ip3}. However, the theorem
applies to the products of many copies of a given $\PP^q$, i.e. to the case of cubic tensors (provided that one knows the
non-defectivity of the corresponding Segre embedding).

 In particular, in the case $n_i=1$ for all $i$, as an immediate corollary of \cite{cgg} and Theorem \ref{ip3},
  we obtain the following result.

\begin{corollary}\label{ip4}
Fix an integer $s\ge 5$ and an integer $k>0$ such that {$(k+s-1)(s+1) \leq 2^s -1$}. Let $X\subset \PP^{2^s-1}$ 
be the Segre embedding of $(\PP^1)^s$. Then $X$ is not weakly $k$-defective, hence it is $k$-identifiable.
\end{corollary}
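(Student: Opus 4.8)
The plan is to obtain the statement as a direct specialization of Theorem \ref{ip3}, feeding in the non-defectivity of $(\PP^1)^s$ proved in \cite{cgg}. First I would record the numerical data of the embedding: for $n_1=\cdots=n_s=1$ one has $n=\dim X=s$ and $r=-1+\prod_{i=1}^s(n_i+1)=2^s-1$, so that $X\subset\PP^{2^s-1}$ is the integral, non-degenerate $s$-dimensional Segre variety to which Theorem \ref{ip3} is meant to apply.

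Next I would check that the two hypotheses of Theorem \ref{ip3} hold. The symmetry condition ``for each $i$ there is $j\neq i$ with $n_j=n_i$'' is immediate, since all the factors are equal ($n_i=1$) and $s\ge 5\ge 2$. For the dimension condition I must verify that $\dim\sigma_{k+n-1}(X)=(k+n-1)(n+1)-1<r$, i.e. with $n=s$ that $\sigma_{k+s-1}(X)$ has the expected dimension $(k+s-1)(s+1)-1$ and that this number is $<2^s-1$. The strict inequality follows at once from the hypothesis: from $(k+s-1)(s+1)\le 2^s-1$ we get $(k+s-1)(s+1)-1\le 2^s-2<2^s-1=r$. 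The equality of $\dim\sigma_{k+s-1}(X)$ with its expected value is the content of \cite{cgg}: for $s\ge 5$ the Segre embedding of $(\PP^1)^s$ has no defective secant variety, so in the admissible range $(k+s-1)(s+1)-1\le r$ the secant variety $\sigma_{k+s-1}(X)$ is of the expected dimension. This is precisely where $s\ge 5$ is used, to exclude the single small defective case.

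With both hypotheses verified, Theorem \ref{ip3} yields that $X$ is not weakly $j$-defective for every $j\le k$, in particular not weakly $k$-defective. The identifiability then follows from the implication recalled in the Introduction (\cite[Theorem 2.5]{cc2}): since $k(s+1)-1\le (k+s-1)(s+1)-1<2^s-1=r$, we are in the range $r>k(n+1)-1$ where non-weak-$k$-defectivity forces $k$-identifiability. The only genuine content of the argument is the correct invocation of \cite{cgg} at the shifted secant index $k+s-1$ rather than $k$, together with the bookkeeping showing that the single numerical hypothesis $(k+s-1)(s+1)\le 2^s-1$ delivers simultaneously the admissibility of the expected dimension and the strict inequality demanded by Theorem \ref{ip3}; everything else is a formal specialization.
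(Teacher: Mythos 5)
Your proposal is correct and is exactly the paper's argument: the authors state Corollary \ref{ip4} as an immediate consequence of Theorem \ref{ip3} combined with the non-defectivity of $(\PP^1)^s$ for $s\ge 5$ from \cite{cgg}, which is precisely the specialization you carry out (with the right shifted index $k+s-1$ and the correct derivation of the strict inequality $<r$ from the non-strict hypothesis). Your only addition is spelling out the bookkeeping and the final appeal to \cite[Theorem 2.5]{cc2}, which the paper leaves implicit since it is already built into the conclusion of Theorem \ref{ip3}.
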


Thus, we obtain that binary tensors, i.e. tensors of type $2\times \dots\times 2$ ($s$ times),
are $k$-identifiable for 
$$ {k \leq\frac{2^s-1}{s+1}-s+1.}$$
This bound is very near to $k_0=\lceil 2^s/(s+1)\rceil$ is the maximal integer for which $k$-identifiability
can hold.

As an immediate corollary of \cite[Theorem 3.1]{ah0} we get the following result.

\begin{corollary}\label{e1}
Take $X= (\PP^m)^s\subset \PP^r$, $m>1$, $r+1 = (m+1)^s$. Set $a:= \lfloor (m+1)^s/(ms+1)\rfloor$ and let $\epsilon$ be
the only integer such that $0 \leq \epsilon \leq m$ and $a \equiv
\epsilon \pmod{m+1}$. If either $\epsilon >0$ or $(k+ms)(sm+1) \leq (m+1)^s-1$, set $a':= a-\epsilon$. If $\epsilon =0$ and
$(ms+1)a = (m+1)^s$ set $a':= a-1$. For any positive integer
$k+ms-1\leq a'$ $X$ is not weakly $k$-defective and hence it is $k$-identifiable.
\end{corollary}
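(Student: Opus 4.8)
The plan is to derive this as a direct application of Theorem \ref{ip3} to the case $n_1 = \cdots = n_s = m$, feeding in the non-defectivity of $(\PP^m)^s$ supplied by \cite[Theorem 3.1]{ah0}. First I would observe that the hypothesis of Theorem \ref{ip3} is automatically satisfied here: since all factors have equal dimension $m$, for every index $i$ there is certainly another index $j \neq i$ with $n_j = n_i = m$ (this uses $s \geq 2$, which is forced by the setup). Thus the symmetry requirement is free, and the only thing left to verify is the secant-dimension hypothesis $\dim \sigma_{k+n-1}(X) = (k+n-1)(n+1) - 1 < r$, where $n = sm$ and $r + 1 = (m+1)^s$.

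The key step is therefore to translate the condition on $k$ in the statement into the non-defectivity statement of Theorem \ref{ip3}, i.e.\ into the equality $\dim \sigma_{k+n-1}(X) = (k+n-1)(n+1)-1$ together with the strict inequality $(k+n-1)(n+1)-1 < r$. Here I would invoke \cite[Theorem 3.1]{ah0}, which describes exactly the range of secant varieties of $(\PP^m)^s$ that have the expected dimension; the quantities $a$, $\epsilon$, and $a'$ defined in the statement are precisely the combinatorial invariants appearing in that theorem that cut off the range of non-defective secants. Writing $t := k + ms - 1 = k + n - 1$, the hypothesis ``$k + ms - 1 \leq a'$'' says exactly that $t \leq a'$, and \cite[Theorem 3.1]{ah0} guarantees that $\sigma_t(X)$ is non-defective for all such $t$, i.e.\ $\dim \sigma_t(X) = t(n+1) - 1$. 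The case distinction on $\epsilon$ (and the sub-case $(ms+1)a = (m+1)^s$) is there precisely to ensure that the bound $a'$ stops short of the value where $\sigma_t$ would fill $\PP^r$, so that the strict inequality $t(n+1)-1 < r$ also holds. Once both conditions are in hand, Theorem \ref{ip3} applies verbatim and yields that $X$ is not weakly $k$-defective, hence $k$-identifiable.

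I expect the main obstacle to be the careful bookkeeping in matching the invariants $a$, $\epsilon$, $a'$ to the exact non-defectivity range of \cite[Theorem 3.1]{ah0}, in particular confirming that $t \leq a'$ forces the \emph{strict} inequality $t(n+1)-1 < r$ rather than merely $\leq$. The subtraction of $\epsilon$ (and of $1$ in the boundary sub-case) is exactly the adjustment that keeps us one step below the filling threshold, so the delicate point is verifying that this adjustment is neither too large (which would weaken the corollary unnecessarily) nor too small (which would break the strict inequality and thus the finiteness of $\Ss_X(q)$). Since everything else is an immediate specialization, I would concentrate the written proof on this numerical verification and then simply cite Theorem \ref{ip3} for the conclusion.
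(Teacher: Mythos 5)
Your proposal is correct and follows exactly the route the paper intends: the paper states Corollary~\ref{e1} as an immediate consequence of \cite[Theorem 3.1]{ah0}, the point being that for $(\PP^m)^s$ the symmetry hypothesis of Theorem~\ref{ip3} is automatic and the invariants $a$, $\epsilon$, $a'$ are exactly the thresholds from \cite{ah0} guaranteeing $\dim\sigma_{k+ms-1}(X)=(k+ms-1)(ms+1)-1<r$. Your added care about the strict inequality (the role of subtracting $\epsilon$, resp.\ $1$ in the filling case) is precisely the bookkeeping the paper leaves implicit.
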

 
Take $X$ as in Corollary \ref{e1}. Note that if $k\ge a+2$, then $\sigma _{a+2}(X)$ has expected dimension $>r$ 
 and hence identifiability fails for $\sigma _{a+2}(X)$. In the special case $s=3$, in Corollary 3.10 it is sufficient to assume $3m(k+3m-1) < (m+1)^3$, 
because the secant varieties of $(\PP^m)^3$ are not defective, by \cite{li}. 

Complete results on the defectivity of Segre products with more than $3$ factors, except for the case of
many copies of $\PP^1$ and  for $(\mathbb{P}^m)^3$ (\cite{li}) and an almost complete for \cite{ah0}, are not available. 
As far as we know, the best procedure to find theoretically the
non defectivity of Segre products is the inductive method described in \cite{aop}. 
\smallskip

For Segre-Veronese varieties, which correspond to rank $1$ partially symmetric tensors, we can join Theorem
\ref{ip1} and Theorem \ref{ip3} to get:

\begin{theorem}\label{ip5} 
Fix integer $s\ge 2$, $n_i>0$, $d_i>0$, $1\leq i\le s$. Set $n:= n_1+\cdots +n_s$ and $ r = -1+\prod \binom {d_i+n_i}{d_i}.$
Let  $X\subset \PP^r$  be the  ($n$-dimensional) Segre-Veronese embedding of the multiprojective space 
$\PP^{n_1}\times \cdots \times \PP^{n_s}$, via the linear system $L_1^{d_1}\otimes\dots\otimes L_s^{d_s}$,
where each $L_i$ is the pull-back of $\Oo_{\PP^{n_i}}(1)$ in the projection onto the $i$-th factor. 

Assume that {$\dim \sigma _{k+n-1}(X) =(k+n-1)(n+1)-1< r$}. Assume also that either:
\begin{itemize}

\item $d_i>1$ for all $i$; or

\item for all $i$ with $d_i=1$ there exists $j\neq i$ such that $d_j=1$ and $n_i=n_j$.
\end{itemize}

Then $X$ is not weakly $j$-defective (hence it is $j$-identifiable) for every $j\leqq k$.
\end{theorem}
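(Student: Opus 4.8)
The plan is to split according to whether every $d_i>1$ or some $d_i=1$, combining the mechanisms of Theorem \ref{ip1} and Theorem \ref{ip3}. In the first case I would write $\Oo_X(1)=\Ll^{\otimes 2}\otimes\Mm$ with $\Ll=\Oo_X(1,\dots,1)$ ample and $\Mm=\Oo_X(d_1-2,\dots,d_s-2)$ nef; then every curve $C\subset X$ satisfies $\deg(\Oo_X(1)|_C)=2\deg(\Ll|_C)+\deg(\Mm|_C)\geq 2$, so $X$ contains no lines and in particular is not uniruled by lines, and Theorem \ref{ip1} applies verbatim. From now on I would assume some $d_i=1$ and argue by contradiction along the lines of Theorem \ref{ip3}.

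The first technical step is an analogue of Remark \ref{b1} describing a positive-dimensional linear space $L\subseteq X$. For any line $\ell\subseteq L$ one has $1=\deg(\Oo_X(1)|_\ell)=\sum_i d_i\deg(L_i|_\ell)$ with each summand $\geq 0$, so exactly one index contributes, and it satisfies $d_i=1$ and $\deg(L_i|_\ell)=1$ while $\deg(L_x|_\ell)=0$ for all $x\neq i$. Since $L$ is swept out by lines through a fixed point, $\pi_x(L)$ is a point whenever $d_x>1$, so $L$ lies in a fibre of $\prod_{d_x>1}\pi_x$, on which the embedding restricts to the Segre embedding of $\prod_{d_i=1}\PP^{n_i}$. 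Applying Remark \ref{b1} there, I would obtain a unique index $m$, necessarily with $d_m=1$, such that $\pi_m(L)$ is a positive-dimensional linear space while $\pi_x(L)$ is a point for every $x\neq m$.

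Next, assuming $X$ weakly $j$-defective for some $j\leq k$, by Remark \ref{rem:catena:weak} and the dimension hypothesis $X$ is weakly $(j+i)$-defective with $t_{j+i}\geq 1$ for $i=0,\dots,n-1$. Chaining Lemma \ref{a2} across the pairs $(j+i,j+i+1)$, the sequence $t_j,\dots,t_{j+n-1}$ cannot be strictly increasing, since it is bounded by $n-1$; hence at some $h\in\{j,\dots,j+n-1\}$ the second alternative of Lemma \ref{a2} occurs and $\Gamma_h=L_1\cup\dots\cup L_h$ is a union of $h$ linearly independent linear spaces of positive dimension. By the monodromy argument of Remark \ref{b1} all the $L_i$ share one distinguished factor $m$, which by the previous step has $d_m=1$, and $m$ is an invariant of the general contact locus independent of the chosen general $A$.

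Finally, the hypothesis furnishes $z\neq m$ with $d_z=1$ and $n_z=n_m$. Since $d_m=d_z$ and $n_m=n_z$, the involution $\psi$ of $\PP^{n_1}\times\cdots\times\PP^{n_s}$ exchanging the $m$-th and $z$-th factors preserves the multidegree $(d_1,\dots,d_s)$, so $\psi^\ast\Oo_X(1)\cong\Oo_X(1)$ and $\psi$ extends to a linear automorphism $\phi$ of $\PP^r$ with $\phi(X)=X$. As in Theorem \ref{ip3}, $\phi(\Gamma_h)=\Gamma_h(\phi(A))$ is again a general contact locus, but its components $\phi(L_i)$ have distinguished factor $z\neq m$, contradicting the invariance of $m$; hence $X$ is not weakly $j$-defective and $j$-identifiability follows from \cite[Theorem 2.5]{cc2}. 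The hard part will be the structural description of linear spaces on the mixed Segre-Veronese variety (the analogue of Remark \ref{b1}) together with checking that the factor-swap is realised by a linear automorphism of $\PP^r$; once these are in place the contradiction simply reproduces the argument of Theorem \ref{ip3}.
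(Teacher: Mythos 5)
Your proposal is correct and follows exactly the route the paper intends: the paper's own proof is just the two-sentence sketch ``$X$ is not uniruled by lines unless some $d_i=1$ (so Theorem \ref{ip1} applies), and when some $d_i=1$ one applies the trick of Theorem \ref{ip3}'', and your argument fills in precisely those details --- the $\Ll^{\otimes 2}\otimes\Mm$ criterion for the all-$d_i>1$ case, the structure of linear subspaces of a Segre--Veronese variety (lines have degree one on exactly one factor, forcing $d_m=1$), the monodromy argument fixing the distinguished factor, and the factor-swap automorphism yielding the contradiction. No gaps; this is the same proof, written out in full.
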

\begin{proof} Notice that $X$ is not uniruled by lines, unless $d_i=1$ for some $i$. When some $d_i$ is $1$, one can still
apply the trick introduced in the proof of Theorem \ref{ip3}.
\end{proof}

\begin{example}

C. Araujo, A. Massarenti and R. Rischter proved that
if $h\leq a^{\lfloor \log _2(d-1)\rfloor}$, then $\sigma _{h+1}(X)$ has the expected dimension (see \cite[Theorem 1.1]{amr} 
for a more precise result). 

For the case $s=2$ there is a very strong conjecture and several results supporting it (\cite{a, ab1, ab3}). 
When $s=2$ H. Abo and M. C. Brambilla  give a list of $9$ classes of secant varieties, which are
known to be defective and they conjecture that they are the only defective ones for $s=2$ (\cite[Conjecture 5.5]{ab3}. 
See \cite[Table 2]{a1} for the list of known (in 2010) non-defective cases when $s=2$.

H. Abo and M. C. Brambilla also gave many examples of defective secant varieties when $s=3,4$ (\cite{ab2}).
See \cite[\S 5]{ab3} for a general conjecture for the non-defectivity of Segre-Veronese varieties. 
If $d_i\ge 2$ for all $i$, then $X$ is not uniruled by lines, 
so we may apply  Theorem \ref{ip1} to obtain the non weak defectivity.
\end{example}

\begin{example}\label{r+1}C. Am\'endola, J.-C. Faug\`ere, K. Ranestad and B. Sturmfels (\cite{afs, ars}) studied the 
Gaussian moment variety $\Gg _{n,d} \subset \PP^N$, $N = \binom{n+d}{n}-1$, whose points are the vectors of all moments
of degree $\le d$ of an $n$-dimensional Gaussian distribution. We have $\dim \Gg _{n,d} =n(n+3)/2$. 
They proved that all the secant varieties of $\Gg _{1,d}$ are non-defective, but $\sigma _2(\Gg _{n,d})$ is defective
for all $n\ge 3$ (\cite[Theorem 13 and Remark 15]{ars}). To apply Theorem \ref{ip1} to $\Gg _{1,d}$ we need prove that 
$\Gg _{1,d}$ is not uniruled by lines. If so, Theorem \ref{ip1}  may be applied to $\sigma _k(\Gg _{1,d})$ for all $k$ with 
{$2(k+1)<d$}. We may assume $d\ge 4$. The variety $\Gg _{1,d}\subset \PP^d$ is described in \cite{afs} and \cite{ars}; 
it is an arithmetically Cohen-Macaulay surface of degree $\binom{d}{2}$ (\cite[Proposition 1 and the last part of its proof and 
Corollary 2]{afs}) and its singular locus is a line $L\subset \Gg _{1,d}$ (\cite[Lemma 4]{ars}). Let $\ell _L: \PP^d\setminus 
L\to \PP^{d-2}$ the note the linear projection of from $L$ and let $\pi : \Gg _{1,d}\setminus L \to \PP^{d-2}$ be the restriction 
of $\ell _L$. Assume that $\Gg _{1,d}$ is ruled by line and call $\Delta$ the one-dimensional family of lines giving the ruling. 
Since the Grassmannian are complete, $\Delta$ coves $\Gg _{1,d}$
and hence the closure of the image of $\pi$ would be an integral and non-degenerate curve $Y\subset \PP^{d-2}$ such that a 
general $L\in \Delta$ is mapped to a general point of $Y$. The surface $\Gg _{1,d}$ is covered by a family of degree $d$ 
rational normal curves of $\PP^d$, each of them tangent to $L$ at a different point. Hence $Y$ must be the rational 
normal curve of degree $d-2$ and $X$ would be contained in a degree $d-2$ $3$-dimensional cone $T\subset \PP^d$ 
with vertex $L$. Since $\deg (\Gg _{1,d}) =\binom{d}{2} > 3(d-2)$, Bezout theorem gives that every cubic hypersurface 
containing $\Gg _{1,d}$ contains $T$, contradicting the fact that $\Gg _{1,d}$ is cut out by cubics (\cite[Proposition 1]{afs}).
\end{example}

{
For Segre products, the most general result that we can obtain with our techniques is resumed in the following theorem. 
It can be applied to a wide class of Segre products, not covered by Theorem \ref{ip3}.

\begin{theorem} \label{ipx}
Let $X$ be the Segre embedding of a product $\mathbb{P}^{n_1}\times \cdots \times \mathbb{P}^{n_q}$ 
in $\mathbb{P}^r$, with $n=\sum n_i=\dim(X)$. Let $k$ be a positive integer. 
Assume that  {$\dim \sigma _{k+n-1}(X) =(k+n-1)(n+1)-1< r$} (in particular
 $X$ is not $(k+n-1)$-defective).
Then $X$ is $j$-identifiable for every $j\le k$.
\end{theorem}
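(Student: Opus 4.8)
The plan is to reduce everything to establishing $k$-identifiability, i.e.\ $\alpha_k=1$. Indeed, by Proposition~\ref{propa1} non-identifiability propagates upward (if $\alpha_j>1$ and $\dim\sigma_{j+1}(X)$ is as expected, then $\alpha_{j+1}\ge\alpha_j>1$), so $\alpha_k=1$ forces $\alpha_j=1$ for every $j\le k$; the dimension hypothesis guarantees that all the intermediate secant varieties $\sigma_j(X)$, $j\le k+n-1$, have the expected dimension, so these reductions are legitimate. If $X$ is not weakly $k$-defective we are done immediately by \cite[Theorem 2.5]{cc2}, since the hypothesis gives $r>(k+n-1)(n+1)-1\ge k(n+1)-1$. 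Thus the whole difficulty is concentrated in the case where $X$ is weakly $k$-defective.

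Assume then $X$ is weakly $k$-defective. First I would run the growth argument of Theorem~\ref{ip1}/\ref{ip3}. By Remark~\ref{rem:catena:weak}, $X$ is weakly $(k+i)$-defective for $i=0,\dots,n-1$, and I apply Lemma~\ref{a2} along the chain $\Gamma_k\subseteq\Gamma_{k+1}\subseteq\dots\subseteq\Gamma_{k+n-1}$. Since each $\dim\Gamma_{k+i}\le n-1$ while $\dim\Gamma_k\ge1$, the dimensions cannot strictly increase at all $n-1$ steps; hence at some step the second alternative of Lemma~\ref{a2} must occur, producing an index $h\in\{k,\dots,k+n-2\}$ for which $\Gamma_h$ is a union of $h$ linearly independent linear spaces. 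Descending with Remark~\ref{remspaces} from $h$ down to $k$ (each step is legitimate because the relevant secant varieties have expected dimension), I conclude that the general contact locus $\Gamma_k=L_1\cup\dots\cup L_k$ is itself a union of $k$ linearly independent linear spaces, each $L_i$ of dimension $t_k>0$ through $P_i$. Finally, Remark~\ref{b1} pins down their position: there is a single factor $m\in\{1,\dots,q\}$ such that every $L_i$ is a linear subspace of a fiber of $\pi_m$, so that in the splitting $X=\PP^{n_m}\times B$ (with $B$ the Segre product of the remaining factors) each $L_i$ lies over a point $b_i\in B$.

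With $\Gamma_k$ understood, I would pass to secant degrees. By Corollary~\ref{corollary:sec:deg}, the $k$-secant degree of $X$ equals the $k$-secant degree of $\Pi_k\cap X$, where $\Pi_k=\spa{\Gamma_k}$. The natural tool is now Proposition~\ref{vipiace} together with Corollary~\ref{cipiace}: since the components $L_i$ are linear and linearly independent, it suffices to verify that any extra component $\Gamma\subseteq\Pi_k\cap X$ along which $\Pi_k$ is tangent satisfies the span condition $\spa{\,\{L_i:i\notin I\}\cup\Gamma\,}\ne\Pi_k$ for every nonempty $I\subseteq\{1,\dots,k\}$; under this condition Corollary~\ref{cipiace} yields $j$-identifiability for all $j\le k$, while if no such $\Gamma$ exists at all then $\Pi_k\cap X=\Gamma_k$ is a union of independent linear spaces, whose secant degree is trivially $1$.

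The main obstacle is exactly this last verification, and it is where the strength of the hypothesis $\dim\sigma_{k+n-1}(X)=(k+n-1)(n+1)-1<r$ must be spent. Indeed Example~\ref{aumb} shows that for almost-unbalanced Segre products an extra tangent component $\Gamma$ can appear and be linearly independent from every $k-1$ of the $L_i$, so that removing one $L_i$ and adjoining $\Gamma$ already spans $\Pi_k$; there the span condition fails and the secant degree jumps above $1$, destroying identifiability. Translating the picture into the tensor description $\Pi_k=\PP(\KK^{n_m+1}\otimes W)$ with $W=\spa{b_1,\dots,b_k}$ and $\Pi_k\cap X=\PP^{n_m}\times(B\cap\PP(W))$, an extra component corresponds to a further point of $B$ lying in the span of $b_1,\dots,b_k$; the crux is to show that the inequality $(k+n-1)(n+1)-1<r$ is incompatible with any such configuration filling $\Pi_k$. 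I expect this to follow from a dimension count on the family of spans $\Pi_k$ sweeping out $\sigma_k(X)$, the bound $(k+n-1)(n+1)-1<r$ being precisely what excludes the almost-unbalanced regime of Example~\ref{aumb} and of \cite[\S8]{bco}, thereby forcing $B\cap\PP(W)=\{b_1,\dots,b_k\}$ and hence $k$-identifiability.
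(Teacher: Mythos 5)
Your setup reproduces the paper's first half faithfully: the reduction via Proposition~\ref{propa1}, the use of \cite[Theorem 2.5]{cc2} when $X$ is not weakly $k$-defective, the chain argument with Lemma~\ref{a2} forcing, at some level $h\le k+n-2$, a contact locus that is a union of linearly independent linear spaces, and Remark~\ref{b1} to place all of these spaces inside fibers of a single projection $\pi_m$. (The paper actually stays at the level $h$ where the dimensions stabilize, rather than descending back to level $k$ with Remark~\ref{remspaces}; that difference is harmless.) The genuine gap is the last step, which you yourself flag as ``the crux'': you must exclude that $\Pi_k=\spa{\Gamma_k}$ is tangent to $X$ along an extra component $\Gamma\not\subset\Gamma_k$ violating the span condition of Proposition~\ref{vipiace}, and for this you offer only the expectation that ``a dimension count on the family of spans $\Pi_k$'' will do it. That is not an argument, and a naive parameter count cannot succeed: the expected dimension of $\sigma_k(X)$ (and even of several higher secant varieties) is perfectly compatible with extra tangency components --- this is exactly what Example~\ref{aumb} shows, where the relevant secant varieties are non-defective and yet the secant degree is $6$. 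What separates Theorem~\ref{ipx} from that example is the hypothesis on $\sigma_{k+n-1}(X)$, and linking that hypothesis to the non-existence of $\Gamma$ requires a specific geometric mechanism which your proposal does not contain.

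The paper's mechanism is concrete. Suppose the span $M$ of the independent linear spaces $L_i$ meets $X$ in a point $P$ outside $\bigcup L_i$. Project to the Segre variety $Y$ of the remaining factors: since $M$ has the expected dimension, $P$ cannot lie in the fiber over one of the points $Q_i\in Y$ beneath the $L_i$'s, so its image is a new point of $Y$ lying in the span of the general points $Q_i$. The generalized trisecant lemma (\cite[p.~109]{acgh}, \cite[Proposition 2.6]{cc1}) then bounds the codimension of $Y$ in its span; a short computation combining this bound with the inequality coming from the dimension hypothesis shows that the product is \emph{unbalanced} in the sense of \cite[Section 8]{bco}; and \cite[Theorem 4.4]{aop} for unbalanced Segre products then forces the relevant secant variety either to fill $\PP^r$ or to be defective, contradicting the hypothesis. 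In the remaining case $M\cap X=\bigcup L_i$, identifiability follows from \cite[Example 2.2]{bc} and Lemma~\ref{gamma}, as you say. So your proposal is structurally correct but stops exactly where the real work begins; to complete it you would need to supply this projection/trisecant-lemma/unbalancedness argument or an equivalent substitute.
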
}
\begin{proof} Assume that $X$ is weakly $k$-defective. Then, as in the proof of Theorem \ref{ip1}, 
there exists some $j$, $k\leq j\leq k+n-2$, such that the general tangential contact loci $\Gamma_j$ and $\Gamma_{j+1}$ 
have the same dimension. Thus, by Lemma \ref{a2} and its proof, the contact loci $\Gamma_j$ and $\Gamma_{j+1}$ 
are formed by a union of linearly independent linear spaces $L_i$, say of dimension $s$. By Remark \ref{b1}, the $L_i$'s 
are contained in some component of the product, which, as in the proof of Theorem \ref{ip5}, is the same for all $i$'s. 
Say for simplicity that the $L_i$'s sits in the first component. Call $Y$ the Segre embedding of 
$\mathbb{P}^{n_2}\times \cdots \times \mathbb{P}^{n_q}$ and call $Q_i\in Y$ the point to which $L_i$ 
maps in the projection $X\to Y$. The $Q_i$'s are general points of $Y$.
 Since $X$ is not $j+1$ defective, the span $M$ of the $L_i$'s has dimension 
$(j+1)(s+1)-1$. If $M$ meets $X$ only in the union of the  $L_i$'s, which are linearly independent subspaces, then by
\cite[Example 2.2]{bc} and  Lemma \ref{gamma}
$X$ is $(j+1)$-identifiable. Otherwise there is a point $P\in (M\cap X)\setminus \bigcup L_i$. The point $P$ cannot belong to the same
fiber over $Y$ of some $L_i$, for otherwise $M$ contains the span of $L_i$ and $P$, and its dimension cannot be $(j+1)(s+1)-1$.
Thus the projection of $M\cap X$ to $Y$ contains a point $Q$ different from the $Q_i$'s. It follows that for a general choice of
$j$ points $Q_1,\dots,Q_j$ in $Y$ the span of the $Q_i$'s meets $Y$ in a point  $Q$ different from the $Q_i$'s.
By the trisecant lemma (\cite[p. 109]{acgh}, \cite[Example 1.8]{cc0}, \cite[Proposition 2.6]{cc1}), this means that the 
codimension of $Y$ in its span is at most $j-1$. Since the codimension of $Y$
is $\Pi (n_i+1)-\sum n_i -1$ (where the product and the sum range from $2$ to $q$), 
and moreover $(j+1)(n+1)< \Pi_{i=1}^q(a_1+1)$, after a short computation we get that:
$$ a_1 > \Pi_{i=2}^q (a_i+1)- \sum_{i=2}^q a_i,$$
i.e. $Y$ is {\it unbalanced} in the sense of \cite[Section 8]{bco}. Since we also have $j\geq a_1$, it follows by \cite[Theorem 4.4] {aop} 
that either $\sigma_{j+1}(X)$ covers $\PP^r$, or $X$ is $(j+1)$-defective. In both cases we have a contradiction.
\end{proof}

\smallskip

We should remark that the inductive method of \cite{aop} is extended to the study of weak defectivity in \cite{bco}. 
The results contained in \cite{bco} are, in several cases, improved by the results that one can get via Theorem \ref{ipx}.
\smallskip

From the computational point of view, let us point out the following:

\begin{remark}\label{comput} For specific examples of Segre products $X$, the {\it stacked hessian} method 
introduced in \cite{cov} can provide a positive answer to the $k$-identifiability of $X$. The stacked hessian method
requires the computations of the derivatives of the span of $k$ tangent spaces at general points,  when
the points move.

In order to prove the non $(k+n)$-defectivity of $X$, by the Terracini's lemma, one has just
to compute the dimension of the span of $k+n$ general tangent spaces (see e.g. \cite[Section 6]{cov2}). 
When $n$ is not too big with respect to $k$, this second method, together with
Theorem \ref{ipx}, can provide the $k$-identifiability of $X$ with a faster procedure.
\end{remark}

\section{Positive characteristic}\label{Sp}

In this section we work over an algebraically closed field $\KK$ of arbitrary characteristic.

\begin{definition}\label{defvs}
Let $X\subset \PP^r$ be an integral and non-degenerate variety. Set $n:= \dim (X)$. $X$ is said to be 
\emph{very strange} if for a general codimension $n$ linear subspace
$M\subset \PP^r$ the set $X\cap M$ is not in linearly general position in $M$, i.e. there is a hyperplane of $M$ 
containing at least $r-n+2$ points of $M\cap X$ (\cite{r}).
\end{definition}

We recall that $X\subset \PP^r$ is not very strange if a general $0$-dimensional section of $X$ is in uniform position 
(\cite[Ch. 2]{acgh},\cite[Introduction]{r}). Hence $X\subset \PP^r$ is not very
strange if either $\mathrm{char}(\KK )=0$ or $X$ is non-singular in codimension $1$ and $r\ge n+3$ 
(\cite[Corollaries 1.6 and 2.2 and Theorem 0.1]{r}). Thus none of the examples of the last section is very strange. 
$X$ is not very strange if a general curve section of $X$ is not strange (\cite[Lemma 1.1]{r}) and in particular
$X$ is not very strange if $\mathrm{char}(\KK )> \deg (X)$.

\begin{lemma}\label{a10}
Let $Y\subset \PP^r$ be an integral and non-degenerate variety, which is not very strange. Set $n:= \dim (Y)$. 
Fix a general $S\subset Y$ with $\sharp (S) \leq r-n-1$.
Then $S$ is the set-theoretic base locus of the linear system on $Y$ induced by $H^0(\PP^r,\Ii _S(1))$.
\end{lemma}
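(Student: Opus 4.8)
The plan is to reinterpret the statement in terms of linear spans. Since every hyperplane containing $S$ contains the whole linear span $\spa{S}$, the intersection of all such hyperplanes is exactly $\spa{S}$, and therefore the set-theoretic base locus on $Y$ of the linear system cut out by $H^0(\PP^r,\Ii_S(1))$ is precisely $\spa{S}\cap Y$. Thus the lemma is equivalent to the assertion that a general $S\subset Y$ with $\sharp(S)=k\le r-n-1$ satisfies $\spa{S}\cap Y=S$ set-theoretically. Note that $\spa{S}$ is a $\PP^{k-1}$ because general points of a non-degenerate variety are linearly independent, and that a hyperplane through $S$ does exist since $k-1<r$.

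First I would prove the equality for the special configurations coming from a general linear section, where the ``not very strange'' hypothesis is exactly what is needed. Choose a general linear subspace $\Lambda\subset\PP^r$ of dimension $r-n$; since $Y$ is not very strange, Definition \ref{defvs} together with the discussion following it guarantees that $\Sigma:=\Lambda\cap Y$ consists of $\deg(Y)$ distinct points in linearly general position in $\Lambda\cong\PP^{r-n}$, i.e. any $r-n+1$ of them are linearly independent. Take $S$ to be any $k$ of these points. Because $\spa{S}\subseteq\Lambda$, we have $\spa{S}\cap Y\subseteq\Lambda\cap Y=\Sigma$, so it suffices to rule out an extra point $x\in\Sigma\setminus S$ lying in $\spa{S}$. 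Such an $x$ would make the $k+1$ points $S\cup\{x\}$ linearly dependent; but $k+1\le r-n+1$, so linear general position forbids this. Hence $\spa{S}\cap Y=S$ for this $S$.

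The main obstacle is to promote this from the special section-type sets $S$ to a genuinely general $S\in\operatorname{Sym}^{k}Y$, and to do so in arbitrary characteristic. For this I would use a semicontinuity argument. Let $U\subseteq Y^{k}$ be the (irreducible) open locus of $k$-tuples spanning a $\PP^{k-1}$, and consider the proper incidence $W=\{(\mathbf{p},y)\in U\times Y:\ y\in\spa{\mathbf{p}}\}$ with its projection $\phi\colon W\to U$, whose fibre over $\mathbf{p}$ is $\spa{\mathbf{p}}\cap Y$. By upper semicontinuity of fibre dimension the locus where $\phi$ has finite fibres is open, and it is nonempty by the previous paragraph; on this locus the length of $\spa{\mathbf{p}}\cap Y$ is an upper semicontinuous function bounded below by $k$, and it equals $k$ at the section example, where the scheme is reduced of length $k$. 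Consequently the locus where $\spa{\mathbf{p}}\cap Y$ is exactly the reduced set $\{p_1,\dots,p_k\}$ is open and nonempty, hence dense in the irreducible $Y^{k}$, which yields the claim for general $S$. I expect the only delicate points to be the correct invocation of uniform position in arbitrary characteristic through the ``not very strange'' hypothesis, and the bookkeeping that a finite scheme of length $k$ containing $k$ distinct reduced points is set-theoretically those points.
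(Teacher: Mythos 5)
Your proposal is correct, and its first half coincides with the paper's: both arguments use the ``not very strange'' hypothesis to say that a general codimension-$n$ linear section $\Sigma=\Lambda\cap Y$ is a set of $\deg(Y)$ reduced points in linearly general position, whence any $S\subset\Sigma$ with $\sharp(S)\le r-n-1$ satisfies $\spa{S}\cap Y=S$ (the paper phrases this via base loci of $H^0(\PP^r,\Ii_S(1))$ restricted to the section, which is the same thing as your reformulation $\spa{S}\cap Y=S$). Where you genuinely diverge is the transfer from this special configuration to a general $S$. The paper does it in one sentence: since $Y$ is non-degenerate, a general $k$-tuple $A$ ``spans a general subspace,'' hence sits inside a general codimension-$n$ plane $N$, so the subsets $E\subset N\cap Y$ already sweep out a dense family of $k$-tuples and the special case \emph{is} the general case. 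Read literally this is delicate: $\spa{A}$ cannot be a general point of the Grassmannian $\mathbb{G}(k-1,r)$ for dimension reasons when $k\le r-n-1$, and what is actually needed (and true, by a dominance count on the incidence variety of pairs $(A,N)$ with $A\subset N\cap Y$) is that a general $A$ lies in \emph{some} general codimension-$n$ section. Your semicontinuity argument sidesteps this entirely: properness of $W\to U$ plus quasi-finiteness gives finiteness over an open set, fibre length is upper semicontinuous there, it is at least $k$ off the diagonals and exactly $k$ at your special point, so on a dense open subset of $Y^k$ the fibre is the reduced set $\{p_1,\dots,p_k\}$. This is longer than the paper's transfer but airtight, and it is the main thing your approach buys. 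The one step you assert without justification, and which your length comparison genuinely needs, is the scheme-theoretic reducedness of the special fibre: this follows in one line, since $\spa{S}\subset\Lambda$ implies that $\spa{S}\cap Y$ is a closed subscheme of the reduced finite scheme $\Lambda\cap Y$, hence is itself reduced of length $k$ (the reducedness of $\Lambda\cap Y$ being Bertini plus the fact that a general $\Lambda$ misses $\operatorname{Sing}(Y)$, exactly as in the paper).
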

\begin{proof}
Let $N\subset \PP^r$ be a general linear space of dimension $n-r-1$. By Bertini's theorem the scheme $N\cap Y$ 
is a finite set of $\deg (Y)$ points. Since $Y$ is not very strange, the set $N\cap Y$ is in linearly general position in $N$. 
Hence for any $E\subset N\cap Y$ with $\sharp (E) =\sharp (S) \leq n-r-1$, the restriction
of $U:= H^0(\PP^r,\Ii _E(1))$ to $N\cap Y$ has $E$ as its set-theoretic base locus. Since $N$ is a linear space 
the restriction of $U$ to $Y$ has base locus contained in $N\cap Y$. Thus $E$ is the base locus of the restriction of $U$ to $T$. 
Since $Y$ is integral and non-degenerate, a general subset $A\subset Y$ with cardinality at most
$r-n$ spans a general subspace of $\PP^r$ with dimension $\sharp (A)-1$. 
Hence $E$ is a general subset of $Y$ with cardinality $\sharp (S)$.
\end{proof}

\begin{proof}[Proof of Theorem \ref{ii1}:] Fix a general $q\in \sigma _k(X)$. If $\dim \sigma _k(X) < kn+k-1$, 
then a dimensional count gives that $\Ss (q,X)$ is infinite. 

Hence to prove Theorem \ref{ii1} it is sufficient to find a contradiction to the existence of $A, B\in \Ss (q,X)$ with $A\ne B$. 
Set $E:= B\setminus A\cup B$ and $Z:= A\cup E$. We have $h^1(\PP^r,\Ii _Z(1)) >0$, 
because $q\in \langle A\rangle \cap \langle B\rangle$ and $q\notin \langle A\cap B\rangle$ (\cite[Lemma 1]{bb}), 
i.e. $Z$ does not impose $\sharp (Z)$ independent conditions to $W$.
We may also assume that $A$ and $B$ are general in their irreducible component of the constructible set $\Ss (q,X)$. 
Since $q$ is general, we may assume that $A$ (resp. $B$) is a general subset of $X$ with cardinality $s$, 
but of course $A\cup B$ is far from being general (in general).  We identify $X$ with $M$, so that $\Ll$ and $\Rr$ 
are line bundles on $X$  and $\Oo _X(1) \cong \Ll \otimes \Rr$. Since $A$ is general and $\sharp (A)\leq \dim V$, the set of all $f\in V$ 
vanishing at all points of $A$ is a linear subspace of $V$ with dimension $\dim (V) -s$. Hence $h^1(\PP^r,\Ii _A(1)) =0$. 
Since $A$ is general, by Lemma \ref{a10} there is a hypersurface
$T\in |V|$ with $A \subset T$ and $E\cap T = \emptyset$. Let $f\in V\setminus \{0\}$ be an equation of $T$. 
The multiplication by $f$ induces an isomorphism between $H^0(X,\Rr)$
and the linear subspace $W(-T):= H^0(X,\Ii _T\otimes \Ll \otimes \Rr )\cap W$ of all $g\in W$ vanishing on $T$. 
Moreover, for any finite subset $S\subset X\setminus T$
this isomorphism sends $H^0(X,\Ii _S\otimes \Rr )$ onto $W(-T-S) =\{h\in W: h_{|T\cup S}\equiv 0\}$. 
Since $\sharp (E)\leq \sharp (B)\leq h^0(\Rr)$ and $B$ is general in $X$, we have
$h^0(X,\Ii _E\otimes \Rr) = h^0(X,\Rr )-\sharp (E)$, i.e. $E$ impose $s$ independent conditions to $H^0(X,\Rr)$. 
Since $T\cap E =\emptyset$, we
get $\dim (W(-T-E)) = \dim (W(-T)) -\sharp (E)$. Hence $H^0(\PP^r,\Ii _Z(1)) = \dim W-\sharp (Z)$, 
i.e. $h^1(\PP^r,\Ii _Z(1))=0$, a contradiction.
\end{proof}

We give an example of application of Theorem \ref{ii1}.

\begin{example}\label{ii2}
Take $X= \PP^{n_1}\times \cdots \times \PP^{n_s}$ and positive integers $d_i>0$ and let $X\subset \PP^r$, 
$r= -1 +\prod _{i=1}^{s}\binom{n_i+d_i}{n_i}$. Write
$d_i =b_i+c_i$ with $b_i>0$, $c_i\ge 0$ and $c_j>0$ for some $j$. 
Set $n:= n_1+\cdots +n_s$. If we fix any  integer $k>0$ such that 
$$k\leq \min \{-2-n +\prod _{i=1}^{s}\binom{n_i+b_i}{n_i}, \prod _{i=1}^{s}  \binom{n_i+c_i}{n_i}\},$$
then Theorem \ref{ii1} applies and proves the identifiability of a general $q\in\PP^r$ of $X$-rank $k$.
\end{example}

\end{document}